\newtheorem{theorem}{Theorem}[section]
\newtheorem{lemma}[theorem]{Lemma}
\newtheorem{definition}[theorem]{Definition}
\newtheorem{example}[theorem]{Example}
\newtheorem{proposition}[theorem]{Proposition}
\newtheorem{corollary}[theorem]{Corollary}
\newcommand\ord{{\rm ord}}
\newcommand\lcm{{\rm lcm}}
\newcommand\Z{{\mathbb Z}}
\numberwithin{equation}{section}
\begin{document}

\title{The Arithmetic of Carmichael Quotients}

\author{Min Sha}
\address{School of Mathematics and Statistics, University of New South Wales,
 Sydney, NSW 2052, Australia}
\email{shamin2010@gmail.com}


\subjclass[2010]{11A25, 11B50, 11A07}



\keywords{Carmichael function, Carmichael quotient, Carmichael-Wieferich number, perfect nonlinear function}

\begin{abstract}
Carmichael quotients for an integer $m\ge 2$ are introduced analogous to Fermat quotients, by using Carmichael function $\lambda(m)$.
Various properties of these new quotients are investigated, such as basic arithmetic properties, sequences derived from Carmichael quotients, Carmichael-Wieferich numbers, and so on. Finally, we link Carmichael quotients to perfect nonlinear functions.

\end{abstract}

\maketitle



\section{Introduction}
Let $p$ be a prime and $a$ an integer not divisible by $p$, by Fermat's little theorem, the \textit{Fermat quotient} of $p$ with base $a$ is defined as follows
$$
Q_{p}(a)=\frac{a^{p-1}-1}{p}.
$$
Moreover, if $Q_{p}(a)\equiv 0$ (mod $p$), then we call $p$ a \textit{Wieferich prime} with base $a$.

This quotient has been extensively studied from various aspects because of its numerous applications in number theory
and computer science; see, for example, \cite{Chen,CD,COW,Gran,OS,R}.
A first comprehensive study of Fermat quotient was published in 1905 by Lerch \cite{Lerch}, which was based on the viewpoint of arithmetic. More arithmetic properties were investigated in \cite{TA3}.

In \cite{ADS}, the authors generalized the definition of Fermat quotient by using Euler's theorem. Let $m\ge 2$ and $a$ be relatively prime integers, the \textit{Euler quotient} of $m$ with base $a$ is defined as follows
$$
Q_{m}(a)=\frac{a^{\varphi(m)}-1}{m},
$$
where $\varphi$ is Euler's totient function. Moreover, if $Q_{m}(a)\equiv 0$ (mod $m$), then we call $m$ a \textit{Wieferich number} with base $a$. They also undertook a very careful study of Euler quotients.

In fact, there are some other generalizations of Fermat quotients, see \cite{TA1,SS,Skula1}. Especially, in
\cite{TA1} the author introduced a quotient like $(a^{e}-1)/m$, where $\gcd(a,m)=1$ and
$e$ is the multiplicative order of $a$ modulo $m$.

In this paper, we introduce a different generalization of Fermat quotient by using Carmichael function and study its arithmetic properties.

For a positive integer $m$, the Carmichael function $\lambda(m)$ is defined to be the exponent of the multiplicative group $(\Z/m\Z)^{*}$. More explicitly, $\lambda(1)=1$; for a prime power $p^{r}$
we define
\begin{equation}
\lambda(p^{r})=\left\{ \begin{array}{ll}
                 p^{r-1}(p-1) & \textrm{if $p\ge 3$ or $r\le 2$},\\
                 2^{r-2} & \textrm{if $p=2$ and $r\ge 3$};
                 \end{array} \right.
\notag
\end{equation}
and
\begin{equation}
\lambda(m)={\rm lcm}(\lambda(p_{1}^{r_{1}}),\lambda(p_{2}^{r_{2}}),\cdots,\lambda(p_{k}^{r_{k}})),
\notag
\end{equation}
where, as usual, ``lcm" means the least common multiple, and  $m=p_{1}^{r_{1}}p_{2}^{r_{2}}\cdots p_{k}^{r_{k}}$ is the prime factorization of $m$.

For every positive integer $m$, we have $\lambda(m)|\varphi(m)$, and
$\lambda(m)=\varphi(m)$ if and only if $m\in \{1,2,4,p^{k},2p^{k}\}$, where $p$ is an odd prime and $k\ge 1$.
In addition, if $m|n$, we have $\lambda(m)|\lambda(n)$.

\begin{definition}
{\rm Let $m\ge 2$ and $a$ be relatively prime integers. The quotient
\begin{equation}
C_{m}(a)=\frac{a^{\lambda(m)}-1}{m}
\notag
\end{equation}
is called the \textit{Carmichael quotient} of $m$ with base $a$. Moreover, if $C_{m}(a)\equiv 0$ $({\rm mod}\, m)$, we call  $m$ a \textit{Carmichael-Wieferich number} with base $a$.
}
\end{definition}

We want to indicate that the term ``Carmichael quotient" was introduced in \cite{TA2} to denote a different quotient, and we think that there is no much danger of confusion.

We extend many known results about Fermat quotients or Euler quotients to Carmichael quotients by using the same techniques,
such as basic arithmetic properties with special emphasis on congruences, the least periods of
 sequences derived from Carmichael quotient, Carmichael-Wieferich numbers. Finally, we link Carmichael quotients to perfect nonlinear functions.

\section{Arithmetic of Carmichael Quotients}
In what follows, we fix $m\ge 2$ an integer unless stated otherwise.

In this section, we study some basic arithmetic properties of Carmichael quotients and extend some results about Fermat quotients or Euler quotients in \cite{ADS,Lerch,Lerch2}. See \cite{ADS} for historical literatures.

For any integer $a$ with $\gcd(a,m)=1$, we have $C_{m}(a)|Q_{m}(a)$. In particular, $C_m(a)=Q_m(a)$ when $m$ is an odd prime power. Furthermore, it is straightforward to prove that
they have the following relation.
\begin{proposition}\label{relation}
For any integer $a$ with $\gcd(a,m)=1$, we have
$$
Q_{m}(a)\equiv \frac{\varphi(m)}{\lambda(m)}\cdot C_{m}(a)\quad ({\rm mod}\, m).
$$
\end{proposition}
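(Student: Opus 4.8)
The plan is to expand $a^{\varphi(m)}$ as a power of $a^{\lambda(m)}$ and then apply the binomial theorem modulo $m^{2}$. Set $t = \varphi(m)/\lambda(m)$, which is a positive integer because $\lambda(m)\mid\varphi(m)$ (recalled in the introduction). By the definition of the Carmichael quotient, $a^{\lambda(m)} = 1 + m\,C_{m}(a)$, hence
\[
a^{\varphi(m)} = \bigl(a^{\lambda(m)}\bigr)^{t} = \bigl(1 + m\,C_{m}(a)\bigr)^{t}.
\]

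Then I would expand the right-hand side by the binomial theorem:
\[
\bigl(1 + m\,C_{m}(a)\bigr)^{t} = 1 + t\,m\,C_{m}(a) + \sum_{j=2}^{t}\binom{t}{j}\,m^{j}\,C_{m}(a)^{j}.
\]
Every term in the final sum is divisible by $m^{2}$, so $a^{\varphi(m)} - 1 \equiv t\,m\,C_{m}(a)\pmod{m^{2}}$. Since $m \mid a^{\varphi(m)} - 1$ by Euler's theorem, I can divide this congruence by $m$ to get
\[
Q_{m}(a) = \frac{a^{\varphi(m)} - 1}{m} \equiv t\,C_{m}(a) = \frac{\varphi(m)}{\lambda(m)}\cdot C_{m}(a) \pmod{m},
\]
which is the asserted congruence.

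I do not expect any genuine obstacle; the statement is essentially a bookkeeping exercise with the binomial expansion, as the phrase ``it is straightforward to prove'' in the text anticipates. The only points deserving a word of care are that $t$ is an integer — so that the exponentiation $\bigl(a^{\lambda(m)}\bigr)^{t}$ is legitimate — which is precisely the divisibility $\lambda(m)\mid\varphi(m)$, and that one must work modulo $m^{2}$ before dividing out the single factor of $m$. The same computation incidentally re-proves the remark preceding the proposition that $C_{m}(a)\mid Q_{m}(a)$, since the displayed expansion gives $Q_{m}(a) = C_{m}(a)\bigl(t + \sum_{j=2}^{t}\binom{t}{j}\,m^{j-1}\,C_{m}(a)^{j-1}\bigr)$.
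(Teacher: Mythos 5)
Your argument is correct, and it is exactly the straightforward computation the paper has in mind when it omits the proof: write $a^{\lambda(m)}=1+mC_{m}(a)$, raise to the integer power $\varphi(m)/\lambda(m)$, expand modulo $m^{2}$, and divide by $m$. The care you take about $\lambda(m)\mid\varphi(m)$ and about working modulo $m^{2}$ before dividing is precisely what is needed, so nothing is missing.
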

\begin{proof}
Since $\lambda(m)|\varphi(m)$, we derive 
\begin{align*}
Q_m(a)&=\frac{(a^{\lambda(m)})^{\varphi(m)/\lambda(m)}-1}{m}\\
&=\frac{(a^{\lambda(m)}-1)\left(1+a^{\lambda(m)}+
\cdots+(a^{\lambda(m)})^{\varphi(m)/\lambda(m)-1}\right)}{m}\\
& \equiv \frac{\varphi(m)}{\lambda(m)} C_{m}(a)\quad ({\rm mod}\, m).
\end{align*}
 
\end{proof}

Now we state two fundamental congruences for Carmichael quotients, which are crucial for further study.
\begin{proposition}\label{fun}
{\rm (1)} If a and b are integers with $\gcd(ab,m)=1$, then we have
$$
C_{m}(ab)\equiv C_{m}(a)+C_{m}(b)\quad ({\rm mod}\, m).
$$
{\rm (2)} If a, k are integers with $\gcd(a,m)=1$, and $\alpha$ is a positive integer, then we have
$$
C_{m}(a+km^{\alpha})\equiv C_{m}(a)+\frac{k\lambda(m)}{a}m^{\alpha-1}\quad ({\rm mod}\, m^{\alpha}).
$$
\end{proposition}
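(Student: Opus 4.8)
The plan is to handle the two parts separately by elementary algebraic identities followed by reduction modulo the appropriate power of $m$. Throughout write $\lambda=\lambda(m)$, and recall the defining property $a^{\lambda}\equiv 1\pmod m$ for every $a$ with $\gcd(a,m)=1$, which in particular makes each Carmichael quotient an integer.

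For part (1), I would start from the identity
\begin{equation}
(ab)^{\lambda}-1=b^{\lambda}(a^{\lambda}-1)+(b^{\lambda}-1),
\notag
\end{equation}
valid for all integers $a,b$. Dividing through by $m$ (legitimate since $m\mid a^{\lambda}-1$ and $m\mid b^{\lambda}-1$) yields the integer identity $C_{m}(ab)=b^{\lambda}C_{m}(a)+C_{m}(b)$. Since $\gcd(b,m)=1$ forces $b^{\lambda}\equiv 1\pmod m$, reducing modulo $m$ gives $C_{m}(ab)\equiv C_{m}(a)+C_{m}(b)\pmod m$. The special case is then immediate: if $b\mid a$ then $\gcd(a/b,m)=1$ is inherited from $\gcd(a,m)=1$, so applying the formula to the factorization $a=b\cdot(a/b)$ and rearranging gives $C_{m}(a/b)\equiv C_{m}(a)-C_{m}(b)\pmod m$. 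I expect no genuine difficulty here.

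For part (2), I would expand $(a+km^{\alpha})^{\lambda}$ by the binomial theorem. Each term containing $(km^{\alpha})^{j}$ with $j\ge 2$ is divisible by $m^{2\alpha}$, and $2\alpha\ge\alpha+1$ because $\alpha\ge 1$; hence
\begin{equation}
(a+km^{\alpha})^{\lambda}\equiv a^{\lambda}+\lambda k\,a^{\lambda-1}m^{\alpha}\pmod{m^{\alpha+1}}.
\notag
\end{equation}
Subtracting $1$ and dividing by $m$ (both sides being divisible by $m$ after the subtraction, since $m\mid a^{\lambda}-1$) gives $C_{m}(a+km^{\alpha})\equiv C_{m}(a)+\lambda k\,a^{\lambda-1}m^{\alpha-1}\pmod{m^{\alpha}}$.

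Finally, interpreting $1/a$ as the inverse of $a$ modulo $m$, I would note that $\lambda k\,a^{\lambda-1}m^{\alpha-1}-\tfrac{k\lambda}{a}m^{\alpha-1}=k\lambda m^{\alpha-1}a^{-1}(a^{\lambda}-1)$ is divisible by $m^{\alpha-1}\cdot m=m^{\alpha}$, so the two expressions agree modulo $m^{\alpha}$, which is the asserted congruence. The only point requiring care — the closest thing to an obstacle — is the bookkeeping on powers of $m$: one must verify that the binomial tail truly lands in $m^{\alpha+1}$, and that replacing $a^{\lambda-1}$ by $a^{-1}$ costs exactly one factor of $m$, so that after dividing by $m$ the final statement is still correct to the modulus $m^{\alpha}$.
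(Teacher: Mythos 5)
Your proof is correct: the integer identity $(ab)^{\lambda}-1=b^{\lambda}(a^{\lambda}-1)+(b^{\lambda}-1)$ for part (1) and the binomial expansion with the tail absorbed into $m^{2\alpha}\mid m^{\alpha+1}$ for part (2), followed by the observation that $a^{\lambda-1}$ and the inverse of $a$ differ by a multiple of $m$, is exactly the routine verification the paper has in mind — it states the proposition without proof, calling it straightforward. No gaps; your bookkeeping on the powers of $m$ and on the interpretation of $1/a$ modulo $m$ is what makes the statement correct to the modulus $m^{\alpha}$, and you handled it properly.
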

\begin{proof}
(1) We only need to notice that 
\begin{align*}
C_m(ab)&=\frac{a^{\lambda(m)}b^{\lambda(m)}-1}{m} \\
&=\frac{(a^{\lambda(m)}-1)(b^{\lambda(m)}-1)+(a^{\lambda(m)}-1)+(b^{\lambda(m)}-1)}{m}.
\end{align*}

(2) Using the binomial expansion, it is easy to see that  
$$
C_{m}(a+km^{\alpha})\equiv \frac{a^{\lambda(m)}+\lambda(m)a^{\lambda(m)-1}km^{\alpha}-1}{m}\quad ({\rm mod}\, m^{\alpha}),
$$
which implies the desired congruence.
 
\end{proof}

The following two corollaries concern some short sums of Carmichael quotients.
\begin{corollary}\label{sum1}
If $m\ge 3$, for any integer $a$ with $\gcd(a,m)=1$, we have
$$
\sum\limits_{k=0}^{m-1}C_{m}(a+km)\equiv 0\quad ({\rm mod}\, m).
$$
\end{corollary}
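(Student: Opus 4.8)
The plan is to derive everything from the second congruence of Proposition \ref{fun}, specialized to $\alpha = 1$. That gives, for every integer $k$,
$$C_m(a+km) \equiv C_m(a) + \frac{k\lambda(m)}{a} \pmod m,$$
and to avoid any ambiguity in the term $1/a$ I would first multiply through by $a$, which turns it into the genuinely polynomial congruence $a\,C_m(a+km) \equiv a\,C_m(a) + k\lambda(m) \pmod m$. Summing over $k = 0, 1, \dots, m-1$ and using $\sum_{k=0}^{m-1} k = \tfrac{m(m-1)}{2}$ then yields
$$a\sum_{k=0}^{m-1} C_m(a+km) \equiv a\,m\,C_m(a) + \lambda(m)\,\frac{m(m-1)}{2} \equiv \lambda(m)\,\frac{m(m-1)}{2} \pmod m.$$

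The remaining task is to show the right-hand side vanishes modulo $m$, and here the point is that $\lambda(m)$ is even for every $m \ge 3$: if $m$ has an odd prime divisor $p$ then $p-1$ divides $\lambda(m)$ and is even, while if $m = 2^{\beta}$ then $m \ge 3$ forces $\beta \ge 2$, so $\lambda(2^{\beta})$ is either $2$ or $2^{\beta-2}$, again even. Consequently $\lambda(m)/2$ is an integer and $\lambda(m)\tfrac{m(m-1)}{2} = \tfrac{\lambda(m)}{2}\,m(m-1) \equiv 0 \pmod m$. Thus $a\sum_{k=0}^{m-1} C_m(a+km) \equiv 0 \pmod m$, and since $\gcd(a,m) = 1$ we may cancel $a$ to conclude. (This also shows why $m \ge 3$ is needed: for $m = 2$ one has $\lambda(2) = 1$ and directly $C_2(a) + C_2(a+2) = \tfrac{a-1}{2} + \tfrac{a+1}{2} = a \equiv 1 \pmod 2$.)

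I do not expect any real difficulty: once Proposition \ref{fun}(2) is in hand the computation is a single summation of an arithmetic progression, in the same spirit as the classical short-sum identities for Fermat quotients. The only steps that call for a moment's care are the decision to clear the denominator $1/a$ before summing, so that the congruences can be added without worrying about which modulus the inverse is taken with respect to, and the elementary observation $2 \mid \lambda(m)$ for $m \ge 3$, which is exactly what forces the $\tfrac{m(m-1)}{2}$ term to collapse modulo $m$.
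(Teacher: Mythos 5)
Your proof is correct and follows essentially the same route as the paper: apply Proposition \ref{fun}(2) with $\alpha=1$, sum the arithmetic progression to get $\lambda(m)\frac{m(m-1)}{2}$ modulo $m$, and invoke the evenness of $\lambda(m)$ for $m\ge 3$. Clearing the denominator $1/a$ first and cancelling $a$ at the end (using $\gcd(a,m)=1$) is only a minor presentational variant of the paper's argument.
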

\begin{proof}
First applying Proposition \ref{fun} (2) and then noticing that $\lambda(m)$ is even when $m\ge 3$, we obtain
$$
\sum\limits_{k=0}^{m-1}C_{m}(a+km)\equiv \frac{\lambda(m)}{a}\cdot\frac{m(m-1)}{2} \equiv 0 \quad ({\rm mod}\, m).
$$
 
\end{proof}

\begin{corollary}\label{sum2}
If $m\ge 3$, for any integer $a$ with $\gcd(a,m)=1$, we have
\begin{equation}
\sum\limits_{\substack{a=1\\ \gcd(a,m)=1}}^{m^{2}}C_{m}(a)\equiv 0\quad ({\rm mod}\, m).
\notag
\end{equation}
\end{corollary}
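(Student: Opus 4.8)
The plan is to deduce Corollary~\ref{sum2} from Corollary~\ref{sum1} by slicing the interval $[1,m^{2}]$ into residue classes modulo $m$. First I would note that every integer $a$ with $1\le a\le m^{2}$ has a unique representation $a=r+km$ with $1\le r\le m$ and $0\le k\le m-1$, and that $\gcd(a,m)=\gcd(r,m)$. Hence, as $a$ runs over the integers in $[1,m^{2}]$ coprime to $m$, the parameter $r$ runs over the $\varphi(m)$ residues in $\{1,\dots,m\}$ coprime to $m$ (the value $r=m$ never occurs, since $m\ge 2$), while for each such $r$ the parameter $k$ runs over all of $\{0,1,\dots,m-1\}$; in particular each coprime class modulo $m$ is represented by exactly $m$ of the summands.

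With this bookkeeping in place, I would rewrite
$$
\sum_{\substack{a=1\\ \gcd(a,m)=1}}^{m^{2}}C_{m}(a)=\sum_{\substack{r=1\\ \gcd(r,m)=1}}^{m}\;\sum_{k=0}^{m-1}C_{m}(r+km).
$$
For each fixed $r$ with $\gcd(r,m)=1$, Corollary~\ref{sum1} applies (here $m\ge 3$ is used) and gives $\sum_{k=0}^{m-1}C_{m}(r+km)\equiv 0\ ({\rm mod}\,m)$. Summing these $\varphi(m)$ congruences yields the assertion.

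The argument presents no genuine obstacle; the one step deserving a moment's care is the counting claim, namely that each residue class modulo $m$ contributes precisely $m$ terms to the sum over $[1,m^{2}]$, so that the inner sums are exactly of the shape treated in Corollary~\ref{sum1}. One could equally well bypass Corollary~\ref{sum1} and apply Proposition~\ref{fun}(2) directly after the same reindexing: with $\alpha=1$ it reduces each inner sum modulo $m$ to $\frac{\lambda(m)}{r}\cdot\frac{m(m-1)}{2}$, which vanishes because $\lambda(m)$ is even for $m\ge 3$ — the very computation already used to prove Corollary~\ref{sum1}.
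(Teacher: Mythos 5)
Your proposal is correct and is essentially the paper's own proof: the paper likewise rewrites the sum over $[1,m^{2}]$ as $\sum_{\substack{a=1\\ \gcd(a,m)=1}}^{m}\sum_{k=0}^{m-1}C_{m}(a+km)$ and invokes Corollary~\ref{sum1}. Your extra bookkeeping about each coprime residue class contributing exactly $m$ terms just makes explicit what the paper leaves to the reader.
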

\begin{proof}
Notice that
$$
\sum\limits_{\substack{a=1\\ \gcd(a,m)=1}}^{m^{2}}C_{m}(a)=
\sum\limits_{\substack{a=1\\ \gcd(a,m)=1}}^{m}\sum\limits_{k=0}^{m-1}C_{m}(a+km).
$$
Then, the desired result follows from Corollary \ref{sum1}.  
\end{proof}

We want to remark that the results in Corollaries \ref{sum1} and \ref{sum2} are not true when $m=2$.

The next proposition concerns some relationships between various $C_{m}(a)$ with fixed base $a$ and different moduli.
\begin{proposition}
{\rm (1)} If $\gcd(a,mn)=1$, then
$$
C_{m}(a)|nC_{mn}(a).
$$
{\rm (2)} If $\gcd(a,mn)=\gcd(m,n)=1$, then
$$
C_{mn}(a)\equiv \frac{\lambda(n)}{n\cdot \gcd(\lambda(m),\lambda(n))}C_{m}(a)\quad ({\rm mod}\, m).
$$
{\rm (3)} Assume that $\gcd(a,mn)=\gcd(m,n)=1$, and let X and Y be two integers satisfying $m^{2}X+n^{2}Y=1$. Then
$$
C_{mn}(a)\equiv \frac{n\lambda(n)}{\gcd(\lambda(m),\lambda(n))}YC_{m}(a)+\frac{m\lambda(m)}{\gcd(\lambda(m),\lambda(n))}XC_{n}(a)\quad ({\rm mod}\, mn).
$$
\end{proposition}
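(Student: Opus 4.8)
The plan is to prove the three parts in order, with part (2) carrying all the computational weight and part (3) reduced to it by the Chinese Remainder Theorem.

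For (1), I would use only the divisibility $\lambda(m)\mid\lambda(mn)$, which is valid because $m\mid mn$, as recorded in the Introduction. Writing $\lambda(mn)=\lambda(m)t$ with $t\ge 1$ and setting $x=a^{\lambda(m)}$, the factorization $x^{t}-1=(x-1)(x^{t-1}+\cdots+1)$ shows $a^{\lambda(m)}-1\mid a^{\lambda(mn)}-1$, that is, $mC_{m}(a)\mid mnC_{mn}(a)$; cancelling $m$ gives the claim.

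For (2), put $d=\gcd(\lambda(m),\lambda(n))$. Since $\gcd(m,n)=1$ we have $\lambda(mn)=\lcm(\lambda(m),\lambda(n))=\lambda(m)\cdot(\lambda(n)/d)$ with $\lambda(n)/d$ a positive integer. I would then write $a^{\lambda(m)}=1+mC_{m}(a)$ and expand
\[
a^{\lambda(mn)}-1=\bigl(1+mC_{m}(a)\bigr)^{\lambda(n)/d}-1\equiv\frac{\lambda(n)}{d}\,mC_{m}(a)\pmod{m^{2}}
\]
by the binomial theorem, every further term carrying a factor $m^{2}$. As the left side equals $mnC_{mn}(a)$, dividing by $m$ gives $nC_{mn}(a)\equiv(\lambda(n)/d)C_{m}(a)\pmod{m}$; since $\gcd(n,m)=1$, $n$ is invertible modulo $m$ and dividing by it yields the stated congruence, where $\lambda(n)/(nd)$ is read as $(\lambda(n)/d)\cdot n^{-1}\bmod m$.

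For (3), apply (2) as stated and then with $m$ and $n$ interchanged to get
\[
C_{mn}(a)\equiv\frac{\lambda(n)}{nd}C_{m}(a)\pmod{m},\qquad C_{mn}(a)\equiv\frac{\lambda(m)}{md}C_{n}(a)\pmod{n}.
\]
From $m^{2}X+n^{2}Y=1$ I read off $n^{2}Y\equiv1\pmod{m}$ and $m^{2}X\equiv1\pmod{n}$, hence $n^{-1}\equiv nY\pmod{m}$ and $m^{-1}\equiv mX\pmod{n}$; substituting turns the two congruences into $C_{mn}(a)\equiv\frac{n\lambda(n)}{d}YC_{m}(a)\pmod{m}$ and $C_{mn}(a)\equiv\frac{m\lambda(m)}{d}XC_{n}(a)\pmod{n}$. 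Denote by $R$ the right-hand side of the asserted formula. Because $d\mid\lambda(m)$ and $d\mid\lambda(n)$, the coefficient $\frac{m\lambda(m)}{d}$ is divisible by $m$ and $\frac{n\lambda(n)}{d}$ by $n$, so $R\equiv\frac{n\lambda(n)}{d}YC_{m}(a)\equiv C_{mn}(a)\pmod{m}$ and $R\equiv\frac{m\lambda(m)}{d}XC_{n}(a)\equiv C_{mn}(a)\pmod{n}$; since $\gcd(m,n)=1$ this forces $R\equiv C_{mn}(a)\pmod{mn}$, which is the claim. I expect the only real hazard to be the modular-inverse bookkeeping — keeping straight when $\lambda(n)/(nd)$ means a genuine fraction versus a residue, and checking that the explicit $X,Y$ convert those residues into the honest integer coefficients appearing in the final formula; the analytic content is just the single binomial expansion in (2).
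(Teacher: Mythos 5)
Your proposal is correct and follows essentially the same route as the paper: part (2) by writing $a^{\lambda(mn)}=\bigl(a^{\lambda(m)}\bigr)^{\lambda(n)/\gcd(\lambda(m),\lambda(n))}$ and expanding binomially, and part (3) by verifying the congruence modulo $m$ and modulo $n$ separately via (2), with $X,Y$ supplying the inverses $n^{-1}\equiv nY\pmod{m}$ and $m^{-1}\equiv mX\pmod{n}$. The only difference is that you also spell out part (1), which the paper leaves unproved, and make explicit the modular-inverse bookkeeping that the paper's terse congruence chain leaves implicit.
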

\begin{proof}
(2) Under the assumption, noticing that $\lambda(mn)=\frac{\lambda(m)\lambda(n)}{\gcd(\lambda(m),\lambda(n))}$, we have
\begin{equation}\label{p1}
\begin{array}{lll}
C_{mn}(a)=\frac{a^{\frac{\lambda(m)\lambda(n)}{\gcd(\lambda(m),\lambda(n))}}-1}{mn}
&=&\frac{\left(a^{\lambda(m)}\right)^{\frac{\lambda(n)}{\gcd(\lambda(m),\lambda(n))}}-1}{mn}\\
&\equiv & \frac{\lambda(n)(a^{\lambda(m)}-1)}{mn\cdot \gcd(\lambda(m),\lambda(n))} \quad ({\rm mod}\, m).
\end{array}
\notag
\end{equation}

(3) It suffices to show that the equality is true for modulo $m$ and modulo $n$ respectively.
But this follows directly from (2).  
\end{proof}

For any integer $a$ with $\gcd(a,m)=1$, we denote $\langle a\rangle$ as the subgroup of $(\mathbb{Z}/m\mathbb{Z})^{*}$ generated by $a$,
and we let $\ord_{m}a$ be the multiplicative order of $a$ modulo $m$. The following expression is so-called Lerch's expression \cite{Lerch2}.
\begin{proposition}\label{con1}
If $\gcd(a,m)=1$ and assume $n=\ord_{m}a$, then
\begin{equation}
C_{m}(a)\equiv\frac{\lambda(m)}{n}\sum\limits_{\substack{r=1\\ r\in\langle a\rangle}}^{m}\frac{1}{ar}\left\lfloor\frac{ar}{m}\right\rfloor
 \quad ({\rm mod}\, m),
\notag
\end{equation}
where $\lfloor x\rfloor$ denotes the greatest integer $\le x$.
\end{proposition}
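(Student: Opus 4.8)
The plan is to mimic the classical derivation of Lerch's formula for Fermat quotients, working with the orbit of $a$ under multiplication modulo $m$. First I would set $n=\ord_m a$ and consider the $n$ elements $1,a,a^2,\dots,a^{n-1}$ reduced to their least positive residues modulo $m$; call them $r_0,r_1,\dots,r_{n-1}$, so that $\{r_0,\dots,r_{n-1}\}$ is exactly the set $\langle a\rangle$ of residues appearing in the sum. For each $r=r_i$ with $r\in\langle a\rangle$, the product $ar$ reduces modulo $m$ to another element of $\langle a\rangle$, so we can write
$$
ar = m\left\lfloor\frac{ar}{m}\right\rfloor + (ar \bmod m),
$$
where $ar\bmod m$ runs over $\langle a\rangle$ again as $r$ does (multiplication by $a$ permutes the cyclic subgroup). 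The key step is to form the product $\prod_{r\in\langle a\rangle} ar$ in two ways and extract a congruence modulo $m^2$, or equivalently to take a "logarithmic" sum. Concretely, divide the displayed identity by $ar$ and rearrange:
$$
1 = \frac{m}{ar}\left\lfloor\frac{ar}{m}\right\rfloor + \frac{ar\bmod m}{ar}.
$$

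Next I would take the product of $\dfrac{ar\bmod m}{ar}$ over all $r\in\langle a\rangle$: the numerator is $\prod_{r\in\langle a\rangle} r$ and the denominator is $a^{n}\prod_{r\in\langle a\rangle} r$, so the product equals $a^{-n}$. On the other hand, from the identity above each factor is $1 - \frac{m}{ar}\lfloor ar/m\rfloor$, and since $m\mid \big(1-\text{that factor}\big)$ we get, modulo $m^2$,
$$
a^{-n} = \prod_{r\in\langle a\rangle}\left(1-\frac{m}{ar}\left\lfloor\frac{ar}{m}\right\rfloor\right)\equiv 1 - m\sum_{\substack{r=1\\ r\in\langle a\rangle}}^{m}\frac{1}{ar}\left\lfloor\frac{ar}{m}\right\rfloor \quad({\rm mod}\; m^2).
$$
Hence $a^{n}-1 \equiv m\,a^{n}\sum \frac{1}{ar}\lfloor ar/m\rfloor \equiv m\sum \frac{1}{ar}\lfloor ar/m\rfloor \pmod{m^2}$, the last step using $a^n\equiv 1\pmod m$ to replace $a^n$ by $1$ inside a sum already multiplied by $m$. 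Dividing by $m$ gives $\dfrac{a^{n}-1}{m}\equiv \sum_{r\in\langle a\rangle}\frac{1}{ar}\lfloor ar/m\rfloor \pmod m$.

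Finally I would relate $\dfrac{a^n-1}{m}$ to $C_m(a)=\dfrac{a^{\lambda(m)}-1}{m}$. Since $n=\ord_m a$ divides $\lambda(m)$, write $\lambda(m)=n\ell$ and use the factorization $a^{\lambda(m)}-1 = (a^{n}-1)\big(1+a^{n}+\cdots+a^{n(\ell-1)}\big)$; because $a^{n}\equiv 1\pmod m$, the second factor is $\equiv \ell = \lambda(m)/n \pmod m$. Therefore $C_m(a)\equiv \dfrac{\lambda(m)}{n}\cdot\dfrac{a^n-1}{m}\equiv \dfrac{\lambda(m)}{n}\sum_{r\in\langle a\rangle}\frac{1}{ar}\lfloor ar/m\rfloor\pmod m$, which is the claim. (This last manipulation is exactly the reasoning already used in Proposition \ref{fun} and in the proof of part (2) of the previous proposition, so it can be cited rather than repeated.) The main obstacle is the bookkeeping in the product step: one must be careful that the map $r\mapsto ar\bmod m$ is a bijection of $\langle a\rangle$ and that all the inverses $1/(ar)$ are interpreted modulo $m$ — but no delicate estimate is needed, everything is an exact congruence manipulation once that permutation is identified.
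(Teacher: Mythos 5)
Your proof is correct and is essentially the paper's own argument: the same Lerch-type trick of writing $ar-m\lfloor ar/m\rfloor$ for the least positive residues of the orbit $\langle a\rangle$, multiplying over the orbit, and extracting a congruence modulo $m^{2}$. The only (immaterial) difference is that you first obtain $\frac{a^{n}-1}{m}\equiv\sum_{r\in\langle a\rangle}\frac{1}{ar}\lfloor\frac{ar}{m}\rfloor \pmod m$ and then multiply by $\lambda(m)/n$ via the geometric-series factorization, whereas the paper carries the exponent $\lambda(m)/n$ inside the product from the start; both steps are exact congruence manipulations with $m$-integral rationals, so the proofs coincide in substance.
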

\begin{proof}
For each $1\le r \le m$ with $r\in \langle a\rangle$, we write $ar\equiv c_{r} ({\rm mod}\, m)$, with $1\le c_{r}\le m$.
Notice that when $r$ runs through all elements with $1\le r \le m$ and $r\in \langle a\rangle$, so does $c_{r}$. Let $P$ denote the product of all such integers $c_{r}$.
If the products and sums below are understood to be taken over all $r$ with $1\le r\le m$ and $r\in \langle a\rangle$, we have
$$
P^{\frac{\lambda(m)}{n}}=\prod c_{r}^{\frac{\lambda(m)}{n}}=\prod\left(ar-m\left\lfloor\frac{ar}{m} \right\rfloor\right)^{\frac{\lambda(m)}{n}}
=a^{\lambda(m)}P^{\frac{\lambda(m)}{n}}\prod\left(1-\frac{m}{ar}\left\lfloor\frac{ar}{m} \right\rfloor\right)^{\frac{\lambda(m)}{n}}.
$$
So
$$
1=a^{\lambda(m)}\prod\left(1-\frac{m}{ar}\left\lfloor\frac{ar}{m} \right\rfloor\right)^{\frac{\lambda(m)}{n}}
\equiv a^{\lambda(m)}\left(1-m\sum\frac{1}{ar}\left\lfloor\frac{ar}{m} \right\rfloor\right)^{\frac{\lambda(m)}{n}} \quad ({\rm mod}\, m^{2}).
$$
Then we get
$$
a^{\lambda(m)}-1\equiv a^{\lambda(m)}\frac{m\lambda(m)}{n}\sum\limits_{\substack{r=1\\ r\in\langle a\rangle}}^{m}\frac{1}{ar}\left\lfloor\frac{ar}{m}\right\rfloor
 \quad ({\rm mod}\, m^{2}),
$$
which implies the desired congruence.  
\end{proof}

In the last part of this section, we describe the decomposition of Carmichael quotients in the dependence of the prime factorization of the modulus. Further we investigate Carmichael quotients for prime power moduli.

\begin{proposition}\label{factor}
Let $m=p_{1}^{r_{1}}\cdots p_{k}^{r_{k}}$ be the prime factorization of $m$,
and let $a$ be an integer with $\gcd(a,m)=1$. For $1\le i\le k$, let $d_{i}=\lambda(m)/\lambda(p_{i}^{r_{i}})$,
$m_{i}=m/p_{i}^{r_{i}}$ and $m^{\prime}_{i}\in \mathbb{Z}$ such that $m_{i}^{2}m^{\prime}_{i}\equiv 1$ {\rm (mod $p_{i}^{r_{i}}$)}. Then
$$
C_{m}(a)\equiv \sum\limits_{i=1}^{k}m_{i}m^{\prime}_{i}d_{i}C_{p_{i}^{r_{i}}}(a)\quad ({\rm mod}\, m).
$$
\end{proposition}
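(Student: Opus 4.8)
The plan is to verify the congruence modulo each prime power $p_j^{r_j}$ separately and then invoke the Chinese Remainder Theorem; since the $p_j^{r_j}$ are pairwise coprime with product $m$, this suffices. Throughout I use that $p_i^{r_i}\mid m$ forces $\lambda(p_i^{r_i})\mid\lambda(m)$, so each $d_i=\lambda(m)/\lambda(p_i^{r_i})$ is a positive integer, and that $\gcd(a,m)=1$ makes every $C_{p_i^{r_i}}(a)$ well defined.

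Fix $j$ and set $b=a^{\lambda(p_j^{r_j})}$, so that by definition $b=1+p_j^{r_j}C_{p_j^{r_j}}(a)$. Since $\lambda(m)=d_j\lambda(p_j^{r_j})$ we have $a^{\lambda(m)}=b^{d_j}$, and expanding by the binomial theorem every term of $b^{d_j}-1$ beyond the linear one carries a factor $(p_j^{r_j})^{2}$, hence
\[
a^{\lambda(m)}-1=b^{d_j}-1\equiv d_j\,p_j^{r_j}\,C_{p_j^{r_j}}(a)\pmod{p_j^{2r_j}}.
\]
Both sides of this congruence and the modulus are divisible by $p_j^{r_j}$, so dividing through by $p_j^{r_j}$ and using $m=m_j p_j^{r_j}$ (which turns the left side into $m_j C_m(a)$) we get $m_j C_m(a)\equiv d_j C_{p_j^{r_j}}(a)\pmod{p_j^{r_j}}$; multiplying by $m_j m_j'$ and recalling $m_j^{2}m_j'\equiv 1\pmod{p_j^{r_j}}$ gives $C_m(a)\equiv m_j m_j' d_j C_{p_j^{r_j}}(a)\pmod{p_j^{r_j}}$.

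Finally I compare this with the right-hand side of the proposition modulo $p_j^{r_j}$: for every $i\ne j$ the factor $m_i=m/p_i^{r_i}$ is a multiple of $p_j^{r_j}$, so the $i$-th summand vanishes mod $p_j^{r_j}$ and only the $i=j$ summand survives, which is precisely $m_j m_j' d_j C_{p_j^{r_j}}(a)$, the expression just found for $C_m(a)$. As $j$ was arbitrary, the Chinese Remainder Theorem yields the asserted congruence modulo $m$. The whole argument is bookkeeping; the one spot that needs care is the binomial step, where one must retain the congruence to the modulus $p_j^{2r_j}$ (not merely $p_j^{r_j}$) so that dividing by $p_j^{r_j}$ still leaves a meaningful congruence. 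One could alternatively iterate part (3) of the preceding proposition over the prime-power factors of $m$, but the direct reduction above seems cleaner.
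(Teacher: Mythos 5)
Your argument is correct and is essentially the paper's own proof: reduce modulo each $p_j^{r_j}$, note that only the $j$-th summand survives there, write $a^{\lambda(m)}=\bigl(a^{\lambda(p_j^{r_j})}\bigr)^{d_j}$ and use the binomial expansion modulo $p_j^{2r_j}$ to get $m_jC_m(a)\equiv d_jC_{p_j^{r_j}}(a)\pmod{p_j^{r_j}}$, then clear $m_j$ with $m_j'$. The only difference is that you spell out the binomial step that the paper leaves implicit in its congruence of fractions, which is a welcome clarification but not a different route.
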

\begin{proof}
It suffices to prove for each $1\le j\le k$,
$$
C_{m}(a)\equiv \sum\limits_{i=1}^{k}m_{i}m^{\prime}_{i}d_{i}C_{p_{i}^{r_{i}}}(a)\quad ({\rm mod}\, p_{j}^{r_{j}}),
$$
that is
$$
C_{m}(a)\equiv m_{j}m^{\prime}_{j}d_{j}C_{p_{j}^{r_{j}}}(a)\quad ({\rm mod}\,p_{j}^{r_{j}}).
$$
Since we have
\begin{equation}
C_{m}(a)=\frac{a^{\lambda(p_{j}^{r_{j}})d_{j}}-1}{m}\equiv \frac{d_{j}(a^{\lambda(p_{j}^{r_{j}})}-1)}{m}\equiv
m_{j}m^{\prime}_{j}d_{j}C_{p_{j}^{r_{j}}}(a)\quad ({\rm mod}\, p_{j}^{r_{j}}),
\notag
\end{equation}
the result follows.  
\end{proof}

\begin{proposition}\label{reduction}
Let $p$ be an odd prime and $\gcd(a,p)=1$. For any two integers $i$ and $j$ with $1\le i\le j$, we have
$$
C_{p^{j}}(a)\equiv C_{p^{i}}(a) \quad ({\rm mod}\, p^{i}).
$$
Besides, for $3\le i\le j$ and $\gcd(a,2)=1$, we have
$$
C_{2^{j}}(a)\equiv C_{2^{i}}(a) \quad ({\rm mod}\, 2^{i-1}).
$$
\end{proposition}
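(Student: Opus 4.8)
The plan is to telescope along consecutive exponents and reduce everything to a one-step estimate that is handled directly by the binomial theorem. First I would note that it suffices to prove
\[
C_{p^{k+1}}(a)\equiv C_{p^{k}}(a)\quad({\rm mod}\,p^{k})\qquad(k\ge 1)
\]
for an odd prime $p$, and
\[
C_{2^{k+1}}(a)\equiv C_{2^{k}}(a)\quad({\rm mod}\,2^{k-1})\qquad(k\ge 3).
\]
Indeed, once these are known, for $i\le k$ each congruence holds \emph{a fortiori} modulo $p^{i}$ (resp.\ modulo $2^{i-1}$, using $i-1\le k-1$), and chaining the steps $k=i,i+1,\dots,j-1$ yields the full statement.

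For the one-step case, put $u=a^{\lambda(p^{k})}$. By the defining property of the Carmichael function $p^{k}\mid u-1$, so $v_{p}(u-1)\ge k$ and $u-1=p^{k}C_{p^{k}}(a)$. Since $\lambda(p^{k+1})=p\,\lambda(p^{k})$ when $p$ is odd, and $\lambda(2^{k+1})=2\,\lambda(2^{k})$ when $k\ge 3$, we have $a^{\lambda(p^{k+1})}=u^{p}$, whence
\[
C_{p^{k+1}}(a)-C_{p^{k}}(a)=\frac{u^{p}-1}{p^{k+1}}-\frac{u-1}{p^{k}}
=\frac{1}{p^{k+1}}\sum_{t=2}^{p}\binom{p}{t}(u-1)^{t}.
\]
The last step is to estimate the $p$-adic valuation of each summand. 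For $2\le t\le p-1$ one has $v_{p}\!\binom{p}{t}=1$ and $v_{p}\big((u-1)^{t}\big)\ge tk$, so that summand has valuation at least $1+2k$; and $\binom{p}{p}=1$ with $v_{p}\big((u-1)^{p}\big)\ge pk\ge 2k+1$. Thus for odd $p$ every term is divisible by $p^{2k+1}$, so after dividing by $p^{k+1}$ the difference is divisible by $p^{k}$, as desired. When $p=2$ the sum collapses to the single term $(u-1)^{2}$, of valuation at least $2k$, so the difference has valuation at least $2k-(k+1)=k-1$, which is exactly the weaker modulus $2^{k-1}$ in the statement.

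The only point that needs care is this valuation bookkeeping: one must observe that the single power of $p$ lost from $\binom{p}{2}$ is absorbed by the factor $(u-1)^{2}$, whereas for $p=2$ the absence of an analogous linear compensation is precisely what costs one power of $2$ and produces the modulus $2^{i-1}$ rather than $2^{i}$. I do not anticipate any genuine obstacle beyond this; the estimates are elementary and uniform in $t$ once one invokes the standard fact that $v_{p}\!\binom{p}{t}=1$ for $1\le t\le p-1$.
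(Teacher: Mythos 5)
Your proof is correct: the telescoping reduction to consecutive exponents and the chaining (later steps holding modulo higher powers, hence a fortiori modulo $p^{i}$, resp.\ $2^{i-1}$) is exactly how the statement follows from the one-step congruences, and your valuation bookkeeping in the one-step case is sound, including the point that $v_{p}\binom{p}{t}=1$ for $2\le t\le p-1$ and that $pk\ge 2k+1$ handles the $t=p$ term for odd $p$. Where you differ from the paper is in how the one-step congruences are established: the paper simply observes that $C_{p^{k}}(a)=Q_{p^{k}}(a)$ for odd $p$ and cites the known result for Euler quotients of prime powers (\cite[Proposition 4.1]{ADS}) to get $C_{p^{k+1}}(a)\equiv C_{p^{k}}(a)\ ({\rm mod}\ p^{k})$, and for $p=2$, $r\ge 3$ it uses the factorization $a^{2^{r-1}}-1=(a^{2^{r-2}}-1)(a^{2^{r-2}}+1)$ to write $C_{2^{r+1}}(a)-C_{2^{r}}(a)=\frac{a^{2^{r-2}}-1}{2}C_{2^{r}}(a)=2^{r-1}C_{2^{r}}(a)^{2}\equiv 0\ ({\rm mod}\ 2^{r-1})$. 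Your unified binomial expansion of $u^{p}$ with $u=a^{\lambda(p^{k})}$ reproves the odd-prime step from scratch rather than importing it, and for $p=2$ your collapsed sum $(u-1)^{2}/2^{k+1}=2^{k-1}C_{2^{k}}(a)^{2}$ is the same quantity the paper obtains by factoring; so your argument buys self-containedness and uniformity across the two cases, while the paper's is shorter by citation and its $2$-adic identity makes the exact loss of one power of $2$ slightly more transparent.
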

\begin{proof}
Notice that $C_{p^{i}}(a)=Q_{p^{i}}(a)$ if $p$ is an odd prime. By \cite[Proposition 4.1]{ADS}, for any integer $k\ge 1$, we have
$$
C_{p^{k+1}}(a)\equiv C_{p^{k}}(a) \quad ({\rm mod}\, p^{k}).
$$
Then the first formula follows.

Since for $r\ge 3$, we have
\begin{equation}
\begin{array}{lll}
C_{2^{r+1}}(a)-C_{2^{r}}(a)&\equiv& \frac{a^{2^{r-2}}-1}{2}C_{2^{r}}(a) \quad ({\rm mod}\, 2^{r})\\
&\equiv& 0 \quad ({\rm mod}\, 2^{r-1}),
\end{array}
\notag
\end{equation}
we get the second formula.   
\end{proof}

The following corollary, about the relation between Carmichael quotients and Fermat quotients,
can be obtained directly from the above two propositions.

\begin{corollary}\label{modp}
Suppose that $p$ is an odd prime factor of $m$, and $p^\alpha$ is the largest power of $p$ dividing $m$.
Let  $d_{1}=\frac{\lambda(m)}{\lambda(p^{\alpha})}$,
$m_{1}=m/p^{\alpha}$, and $m_{1}^{\prime}\in \mathbb{Z}$ such that $m_{1}^{2}m_{1}^{\prime}\equiv 1$ {\rm (mod $p^{\alpha}$)}. Then
for any integer $a$ with $\gcd(a,m)=1$, we have
$$
C_{m}(a)\equiv m_{1}m_{1}^{\prime}d_{1}Q_{p}(a)\quad ({\rm mod}\, p).
$$
\end{corollary}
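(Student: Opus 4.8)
The plan is to obtain this by chaining Proposition~\ref{factor} with Proposition~\ref{reduction}. First I would extract from the proof of Proposition~\ref{factor} the single-prime congruence it actually establishes: taking $p_{j}=p$, $r_{j}=\alpha$ there, one has
$$
C_{m}(a)\equiv m_{1}m_{1}^{\prime}d_{1}C_{p^{\alpha}}(a)\quad({\rm mod}\;p^{\alpha}),
$$
with $d_{1}=\lambda(m)/\lambda(p^{\alpha})$, $m_{1}=m/p^{\alpha}$, and $m_{1}^{\prime}$ an inverse of $m_{1}^{2}$ modulo $p^{\alpha}$ — exactly the quantities named in the corollary. Reducing this modulo $p$ gives $C_{m}(a)\equiv m_{1}m_{1}^{\prime}d_{1}C_{p^{\alpha}}(a)\ ({\rm mod}\;p)$.

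Next I would apply Proposition~\ref{reduction} with $i=1$ and $j=\alpha$, which is legitimate since $p$ is odd and $\alpha\ge 1$, to get $C_{p^{\alpha}}(a)\equiv C_{p}(a)\ ({\rm mod}\;p)$. Finally, because $p$ is an odd prime we have $\lambda(p)=p-1=\varphi(p)$, so by the very definition of the two quotients $C_{p}(a)=\frac{a^{p-1}-1}{p}=Q_{p}(a)$. Substituting back yields the claimed congruence $C_{m}(a)\equiv m_{1}m_{1}^{\prime}d_{1}Q_{p}(a)\ ({\rm mod}\;p)$.

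There is no genuine obstacle here; the entire content sits in the two propositions already proved. The only points requiring a moment's care are bookkeeping ones: the auxiliary integer $m_{1}^{\prime}$ in Proposition~\ref{factor} is only pinned down modulo $p^{\alpha}$, but since we ultimately work modulo $p$ any such choice is equally valid, so the statement is well posed; and the edge case $\alpha=1$, where $C_{p^{\alpha}}(a)=C_{p}(a)$ already and the use of Proposition~\ref{reduction} is vacuous, causes no difficulty. Thus the corollary follows directly.
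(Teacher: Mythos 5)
Your proof is correct and follows exactly the route the paper intends: the paper derives Corollary~\ref{modp} directly from Propositions~\ref{factor} and~\ref{reduction}, together with the observation that $C_{p}(a)=Q_{p}(a)$ since $\lambda(p)=p-1$. Your bookkeeping remarks about $m_{1}^{\prime}$ and the case $\alpha=1$ are fine and add nothing problematic.
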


\section{Sequences derived from Carmichael quotients}

In this section, we will define two periodic sequences by Carmichael quotients and determine their least (positive) periods
following the method in the proof of \cite[Proposition 2.1]{CW}.

As usual, for a periodic sequence $\{s_n\}_{n=1}^{\infty}$, a positive integer $j$ is called its \textit{period} if $s_{n+j}=s_n$ for any $n\ge 1$; if further $j$ is the smallest positive integer endowed with such property, we call $j$ the \textit{least period} of $\{s_n\}$.

Let $m=p_{1}^{r_{1}}\cdots p_{k}^{r_{k}}$ be the prime factorization of the integer $m$ ($m\ge 2$).
For each $1\le i\le k$, put $m_{i}=m/p_{i}^{r_{i}}$, and let $w_{i}$ be the integer defined by
$p_{i}^{w_{i}}=\gcd(\lambda(m)/\lambda(p_{i}^{r_{i}}),p_{i}^{r_{i}})$, here note that $0\le w_i \le r_i$. 

Now, we want to define a sequence $\{a_{n}\}$ following the manner in \cite{CW}.

First, for any integer $n$ and any $1\le i \le k$, 
if $p_{i}|n$, set $C_{p_{i}^{r_{i}}}(n)=0$.
Then, for every integer $n\ge 1$, by Proposition \ref{factor}, $a_{n}$ is defined as the unique integer with
$$
a_{n}\equiv \sum\limits_{i=1}^{k}\frac{m_{i}m^{\prime}_{i}\lambda(m)}{\lambda(p_{i}^{r_{i}})}C_{p_{i}^{r_{i}}}(n)\quad ({\rm mod}\, m), \qquad 0\le a_{n}\le m-1,
$$
where $m^{\prime}_{i}\in \mathbb{Z}$ is such that $m_{i}^{2}m^{\prime}_{i}\equiv 1$ {\rm (mod $p_{i}^{r_{i}}$)} for
each $1\le i\le k$. So, if $\gcd(n,m)=1$, we have $a_{n}\equiv C_{m}(n)$ (mod $m$).

By Proposition \ref{fun} (2), $m^{2}$ is a period of $\{a_{n}\}$. We denote its least period by $T$.
For each $1\le i\le k$, let $T_{i}$ be the least period of the sequence $\{a_{n}\mod p_{i}^{r_{i}}\}$. Obviously, we have
$$
T=\lcm(T_{1},\cdots,T_{k}).
$$
Thus, in order to determine $T$, it suffices to compute $T_{i}$ for each $1\le i\le k$.

For every $1\le i\le k$, we have
\begin{equation}\label{1period}
a_{n}\equiv \frac{\lambda(m)}{m_{i}\lambda(p_{i}^{r_{i}})}C_{p_{i}^{r_{i}}}(n)\quad ({\rm mod}\, p_{i}^{r_{i}}).
\end{equation}
So, $T_{i}$ equals to the least period of $\{C_{p_{i}^{r_{i}}}(n)\mod p_{i}^{r_{i}-w_{i}}\}$.
Here, we also denote $T_{i}$ as the least period of the sequence $\{C_{p_{i}^{r_{i}}}(n)\mod p_{i}^{r_{i}-w_{i}}\}$ without confusion.
In the sequel, we will calculate $T_{i}$ case by case for any fixed $1\le i\le k$.
\begin{lemma}
If $w_{i}=r_{i}$, then $T_{i}=1$.
\end{lemma}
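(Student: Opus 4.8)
The plan is to unwind the definitions and observe that the case $w_i=r_i$ is degenerate, so there is nothing substantial to prove. Recall that, by the congruence \eqref{1period} and the remark immediately following it, $T_i$ is the least period of the sequence $\{C_{p_i^{r_i}}(n)\}$ modulo $p_i^{r_i-w_i}$. Thus the first (and only) step is to set $w_i=r_i$: then $r_i-w_i=0$, hence $p_i^{r_i-w_i}=p_i^0=1$. Since every integer is congruent to $0$ modulo $1$, the sequence $\{C_{p_i^{r_i}}(n)\}$ is constant modulo $1$, and therefore its least positive period is $1$; that is, $T_i=1$.

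Alternatively, one can read this off \eqref{1period} directly. By definition $p_i^{w_i}=\gcd(\lambda(m)/\lambda(p_i^{r_i}),p_i^{r_i})$, so $p_i^{w_i}\mid \lambda(m)/\lambda(p_i^{r_i})$; when $w_i=r_i$ this forces $p_i^{r_i}\mid \lambda(m)/\lambda(p_i^{r_i})$, and hence the coefficient $\lambda(m)/(m_i\lambda(p_i^{r_i}))$ on the right-hand side of \eqref{1period} is $\equiv 0\pmod{p_i^{r_i}}$. Consequently $a_n\equiv 0\pmod{p_i^{r_i}}$ for every $n$, so the sequence $\{a_n\}$ modulo $p_i^{r_i}$ is identically zero and again $T_i=1$.

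There is essentially no obstacle here: this lemma is a bookkeeping step whose only role is to dispose of the trivial case $w_i=r_i$, so that in the subsequent lemmas one may assume $0\le w_i<r_i$ (equivalently $p_i^{r_i-w_i}>1$), where the computation of $T_i$ genuinely requires analysis of the period of the Carmichael quotient sequence modulo a nontrivial prime power.
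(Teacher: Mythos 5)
Your proof is correct and matches the paper's argument: when $w_i=r_i$ the modulus $p_i^{r_i-w_i}$ is $1$, so the relevant sequence is identically zero and the least period is trivially $1$. Your alternative reading via \eqref{1period} (the coefficient being divisible by $p_i^{r_i}$) is a harmless restatement of the same degenerate case.
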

\begin{proof}
Since in this case we have $C_{p_{i}^{r_{i}}}(n)\equiv 0$ (mod $p_{i}^{r_{i}-w_{i}}$) for all $n\ge 1$.  
\end{proof}

\begin{lemma}
If $p_{i}>2$ and $w_{i}<r_{i}$, then $T_{i}=p_{i}^{r_{i}-w_{i}+1}$.
\end{lemma}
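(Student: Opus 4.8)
The plan is to reduce the statement to a sharpened form of Proposition \ref{fun} (2) for odd prime powers, following the method used for Fermat and Euler quotients in \cite{CW,OS}. Write $p=p_i$, $r=r_i$, $w=w_i$ and $s=r-w$, so that the hypothesis $w_i<r_i$ gives $s\ge 1$; since $p$ is odd, $C_{p^r}(a)=Q_{p^r}(a)$. Recall that, by the reduction made just before the lemmas (see \eqref{1period} and the sentence following it), $T_i$ is the least period of the sequence $n\mapsto C_{p^r}(n)\bmod p^{s}$, where by convention $C_{p^r}(n)=0$ when $p\mid n$; so we must show this least period equals $p^{s+1}$. The key ingredient is the congruence
$$
C_{p^{r}}(a+kp^{\beta})\equiv C_{p^{r}}(a)-k\,a^{-1}\,p^{\beta-1}\quad({\rm mod}\; p^{\beta}),
$$
valid for all $\beta\ge 1$, all $k\in\mathbb{Z}$ and all $a$ with $\gcd(a,p)=1$, where $a^{-1}$ denotes the inverse of $a$ modulo $p$. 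To obtain it one expands $(a+kp^{\beta})^{\lambda(p^{r})}$ by the binomial theorem: as $\lambda(p^{r})=p^{r-1}(p-1)$, the linear term equals $(p-1)a^{\lambda(p^{r})-1}k\,p^{\beta+r-1}$, of $p$-adic valuation exactly $\beta+r-1$, whereas every term of index $j\ge 2$ has valuation at least $\beta+r$ (a short check using $p$ odd, via $\binom{\lambda(p^r)}{j}=\frac{\lambda(p^r)}{j}\binom{\lambda(p^r)-1}{j-1}$); dividing by $p^{r}$ and noting $(p-1)a^{\lambda(p^{r})-1}\equiv -a^{-1}\pmod p$ yields the formula.

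Granting this, I first check that $p^{s+1}$ is a period of $n\mapsto C_{p^r}(n)\bmod p^{s}$. If $p\mid n$ then $p\mid n+p^{s+1}$ and both values are $0$; if $\gcd(n,p)=1$, the congruence with $a=n$, $k=1$, $\beta=s+1$ gives $C_{p^r}(n+p^{s+1})\equiv C_{p^r}(n)-n^{-1}p^{s}\pmod{p^{s+1}}$, and reducing modulo $p^{s}$ we get $C_{p^r}(n+p^{s+1})\equiv C_{p^r}(n)\pmod{p^{s}}$. Hence $T_i\mid p^{s+1}$, so $T_i=p^{t}$ for some $0\le t\le s+1$.

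It remains to show $t=s+1$, i.e. that $p^{s}$ is not a period. Pick any $n$ with $\gcd(n,p)=1$ and apply the congruence with $k=1$, $\beta=s$: then $C_{p^r}(n+p^{s})\equiv C_{p^r}(n)-n^{-1}p^{s-1}\pmod{p^{s}}$, and $n^{-1}p^{s-1}\not\equiv 0\pmod{p^{s}}$ since its valuation is $s-1<s$. Thus $p^s$ is not a period; but the least period $p^t$ is itself a period and $p^s$ is a multiple of $p^t$ whenever $t\le s$, so any such $t$ would make $p^s$ a period — impossible. Hence $t=s+1$ and $T_i=p^{s+1}=p^{r_i-w_i+1}$.

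The only genuine work is the sharpened congruence, and in particular verifying that the perturbation has $p$-adic valuation exactly $\beta-1$; this is precisely where the oddness of $p$ enters (the $p=2$ estimate is weaker, which is why the even prime is postponed to the following lemmas). In the special case $s=1$ the statement recovers the classical fact that the Fermat quotient sequence $n\mapsto q_p(n)$ has least period $p^{2}$ modulo $p$.
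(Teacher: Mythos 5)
Your proof is correct and follows essentially the same route as the paper: the heart in both cases is the congruence $C_{p^{r}}(n+kp^{\beta})\equiv C_{p^{r}}(n)-kn^{-1}p^{\beta-1}\ ({\rm mod}\ p^{\beta})$, which the paper obtains by combining Proposition \ref{fun} (2) with Proposition \ref{reduction}, while you rederive it directly by a binomial-theorem estimate (correctly noting where oddness of $p$ is used). You additionally spell out the period/non-period deduction that the paper leaves implicit; no gaps.
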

\begin{proof}
Combining Proposition \ref{fun} (2) with Proposition \ref{reduction}, for integers $n$ and $\ell$ with $\gcd(n,p_{i})=1$, we have
\begin{align*}
C_{p_{i}^{r_{i}}}(n+\ell p_{i}^{r_{i}-w_{i}})
& \equiv C_{p_{i}^{r_{i}-w_i}}(n+\ell p_{i}^{r_{i}-w_{i}}) \\
& \equiv C_{p_{i}^{r_{i}-w_i}}(n)+\ell n^{-1}(p_i-1)p_{i}^{r_{i}-w_{i}-1}\\
& \equiv C_{p_{i}^{r_{i}}}(n)+\ell n^{-1}(p_i-1)p_{i}^{r_{i}-w_{i}-1}
\quad ({\rm mod}\, p_{i}^{r_{i}-w_{i}}).
\end{align*}
Thus, $T_{i}=p_{i}^{r_{i}-w_{i}+1}$.  
\end{proof}

Now, it remains to consider the case $p_i=2$.

\begin{lemma}
If $p_{i}=2$ and $w_{i}=0$, then
\begin{equation}
T_{i}=\left\{ \begin{array}{ll}
                 4 & r_{i}=1,\\

                 8 & r_{i}=2,\\

                 2^{r_{i}+2} & r_{i}\ge 3
                 \end{array} \right.
\notag
\end{equation}
\end{lemma}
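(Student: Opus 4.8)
The plan is to reduce all three cases to a single computation, namely the $2$-adic valuation $v := v_{2}(\lambda(2^{r_{i}}))$, and then to specialize. Write $r = r_{i}$. By the reduction made just before the lemma, $T_{i}$ is the least period of the sequence $n \mapsto C_{2^{r}}(n) \bmod 2^{r}$, where by convention $C_{2^{r}}(n) = 0$ whenever $n$ is even. For odd $n$ and any integer $\ell \ge 1$, Proposition \ref{fun} (2) with $m = 2^{r}$ and $\alpha = 1$ gives
$$
C_{2^{r}}(n + \ell\, 2^{r}) \equiv C_{2^{r}}(n) + \ell\,\lambda(2^{r})\,\bar{n} \pmod{2^{r}},
$$
where $\bar{n}$ denotes an (automatically odd) inverse of $n$ modulo $2^{r}$. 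Taking $\ell = 2^{r}$ already shows that $2^{2r}$ is a period of the whole sequence (for even $n$ the shift is even, so both values are $0$), so $T_{i} \mid 2^{2r}$ and in particular $T_{i}$ is a power of $2$. Recall also that $\lambda(2^{r}) = 2^{v}$, with $v = 0$ if $r = 1$, $v = 1$ if $r = 2$, and $v = r-2$ if $r \ge 3$; in every case $r - v \ge 1$.

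The heart of the argument is then two observations. First, I would check that $P := 2^{2r-v}$ is a period: for even $n$ this is immediate since $P$ is even, while for odd $n$ the displayed congruence with $\ell = 2^{r-v}$ gives $C_{2^{r}}(n + P) \equiv C_{2^{r}}(n) + 2^{r-v}\lambda(2^{r})\,\bar{n} = C_{2^{r}}(n) + 2^{r}\,\bar{n} \equiv C_{2^{r}}(n) \pmod{2^{r}}$; hence $T_{i} \mid P$. Second, I would check that $P/2 = 2^{2r-v-1}$ is not a period: for any fixed odd $n$ the congruence with $\ell = 2^{r-v-1}$ gives $C_{2^{r}}(n + P/2) \equiv C_{2^{r}}(n) + 2^{r-1}\,\bar{n} \pmod{2^{r}}$, and $2^{r-1}\bar{n} \equiv 2^{r-1} \not\equiv 0 \pmod{2^{r}}$ because $\bar{n}$ is odd. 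Since any multiple of a period is again a period, $T_{i}$ cannot divide $P/2$; being a power of $2$ that divides $P$ but not $P/2$, it must equal $P = 2^{2r-v}$.

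Substituting the values of $v$ finishes the proof: $r = 1$ gives $T_{i} = 2^{2} = 4$, $r = 2$ gives $T_{i} = 2^{3} = 8$, and $r \ge 3$ gives $T_{i} = 2^{2r-(r-2)} = 2^{r+2}$, which is the asserted formula. I expect the only (mild) obstacle to be the $2$-adic bookkeeping that makes $r = 2$ behave differently from $r \ge 3$ — i.e. remembering that $\lambda(4) = 2$ has valuation $1$ whereas $\lambda(2^{r})$ has valuation $r-2$ for $r \ge 3$ — since otherwise the proof is just one application of Proposition \ref{fun} (2) together with the elementary fact that the least period of a periodic sequence divides every period.
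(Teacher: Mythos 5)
Your proof is correct and follows essentially the same route as the paper: the paper's entire argument is the single congruence $C_{2^{r_i}}(n+a2^{r_i})\equiv C_{2^{r_i}}(n)+an^{-1}\lambda(2^{r_i})\ ({\rm mod}\ 2^{r_i})$ from Proposition \ref{fun} (2), and you apply exactly this, merely organizing the three cases through the $2$-adic valuation of $\lambda(2^{r_i})$ and making explicit the upper/lower period bounds that the paper leaves implicit.
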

\begin{proof}
Notice that for each $n$ with $\gcd(n,2)=1$, by Proposition \ref{fun} (2) we have
$$
C_{2^{r_{i}}}(n+\ell\cdot 2^{r_{i}})\equiv C_{2^{r_{i}}}(n)+\ell n^{-1}\lambda(2^{r_{i}})
\quad ({\rm mod}\, 2^{r_{i}}).
$$
Then, the result follows easily.  
\end{proof}

\begin{lemma}\label{2period}
For $r\ge 3$, the least period of the sequence $\{C_{2^{r+1}}(n)\mod 2^{r}\}$ is $2^{r+2}$.
\end{lemma}
\begin{proof}
For $r\ge 3$ and $\gcd(n,2)=1$, we have $C_{2^{r+1}}(n)=\frac{n^{2^{r-2}}+1}{2}C_{2^{r}}(n)$. Then using Proposition \ref{fun} (2), we deduce that 
\begin{equation}
\begin{array}{lll}
C_{2^{r+1}}(n+\ell\cdot 2^{r})-C_{2^{r+1}}(n)&=&\frac{n^{2^{r-2}}+1}{2}\left(C_{2^{r}}(n+\ell\cdot 2^{r})-C_{2^{r}}(n)\right)\\
&\equiv&\frac{n^{2^{r-2}}+1}{2}\cdot \ell n^{-1}2^{r-2} \quad ({\rm mod}\, 2^{r}),  
\end{array}
\notag
\end{equation}
which implies the desired result by noticing that $n^{2^{r-2}} \equiv 1 \pmod{2^r}$ and then  $\frac{n^{2^{r-2}}+1}{2}$ is odd.  
\end{proof}

\begin{lemma}
If $p_{i}=2$ and $3\le r_{i}-w_{i}<r_{i}$, then $T_{i}=2^{r_{i}-w_{i}+2}$.
\end{lemma}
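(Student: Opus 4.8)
The plan is to reduce everything to the case already treated in Lemma \ref{2period}. Recall that $T_i$ has been identified with the least period of the sequence $\{C_{2^{r_i}}(n)\}$ modulo $2^{r_i-w_i}$, where by convention $C_{2^{r_i}}(n)=0$ whenever $n$ is even. Put $s=r_i-w_i$, so the hypothesis reads $3\le s<r_i$; since $s<r_i$ we have $w_i\ge 1$, and therefore $3\le s+1\le r_i$.

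The first step is to show that for every odd $n$,
$$
C_{2^{r_i}}(n)\equiv C_{2^{s+1}}(n)\quad({\rm mod}\, 2^{s}).
$$
This is exactly Proposition \ref{reduction} applied with the exponent pair $(s+1,r_i)$ in place of $(i,j)$: the inequalities $3\le s+1\le r_i$ make the application legitimate, and the proposition delivers a congruence modulo $2^{(s+1)-1}=2^{s}$, which is precisely the modulus we want. Now both sequences $\{C_{2^{r_i}}(n)\}$ and $\{C_{2^{s+1}}(n)\}$ take the value $0$ at every even $n$ and, by the displayed congruence, agree modulo $2^{s}$ at every odd $n$; hence they are the same sequence of residues modulo $2^{s}$, and in particular they have the same least period.

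The second step is to invoke Lemma \ref{2period} with $r=s$, which is permitted because $s\ge 3$: the least period of $\{C_{2^{s+1}}(n)\}$ modulo $2^{s}$ equals $2^{s+2}$. Combining the two steps gives $T_i=2^{s+2}=2^{r_i-w_i+2}$, as claimed.

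Essentially all the content lies in the first step, and the only point that genuinely needs care is the power of $2$ in the modulus: a careless application of Proposition \ref{reduction} with $i=s$ would yield only a congruence modulo $2^{s-1}$, which is too weak, so one must apply it with $i=s+1$, and this is exactly where the hypothesis $w_i\ge 1$ (equivalently $s<r_i$, forcing $s+1\le r_i$) is used. One should also check that Lemma \ref{2period} is stated with the same "value $0$ at even arguments" convention, so that the identification of the two sequences in the first step is an honest equality. A self-contained alternative would be to mimic the proofs of the preceding lemmas and compute $C_{2^{r_i}}(n+a2^{r_i-w_i})-C_{2^{r_i}}(n)$ modulo $2^{r_i-w_i}$ directly by expanding $(n+a2^{r_i-w_i})^{2^{r_i-2}}$ via the binomial theorem, but this is considerably more laborious and is unnecessary once the reduction above is in place.
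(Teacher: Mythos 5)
Your proof is correct and follows exactly the paper's argument: apply Proposition \ref{reduction} with exponents $(r_i-w_i+1,\,r_i)$ to get $C_{2^{r_i}}(n)\equiv C_{2^{r_i-w_i+1}}(n)\pmod{2^{r_i-w_i}}$, then conclude with Lemma \ref{2period} for $r=r_i-w_i\ge 3$. Your added remarks (why the exponent must be $s+1$ rather than $s$, and the agreement at even $n$ under the zero convention) are just explicit versions of details the paper leaves implicit.
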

\begin{proof}
By Proposition \ref{reduction}, for $\gcd(n,2)=1$, we have
$$
C_{2^{r_{i}}}(n)\equiv C_{2^{r_{i}-w_{i}+1}}(n)\quad ({\rm mod}\, 2^{r_{i}-w_{i}}).
$$
Then, the result follows directly from Lemma \ref{2period}.  
\end{proof}

\begin{lemma}
If $p_{i}=2$, $r_{i}\ge 3$ and $1\le r_{i}-w_{i}\le 2$, then $T_{i}=2^{r_{i}-w_{i}+2}$.
\end{lemma}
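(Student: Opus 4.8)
The plan is to reduce the computation of $C_{2^{r_i}}(n)$ modulo $2^{r_i-w_i}$ to the single quotient $C_{8}(n)$, and then to read off the period from an explicit formula. Since $r_i\ge 3$ and $1\le r_i-w_i\le 2$ (so in particular $w_i\ge 1$), applying Proposition \ref{reduction} with exponents $3$ and $r_i$ gives $C_{2^{r_i}}(n)\equiv C_{2^{3}}(n)\pmod{2^{2}}$ for every odd $n$, hence a fortiori $C_{2^{r_i}}(n)\equiv C_{8}(n)\pmod{2^{r_i-w_i}}$ for odd $n$; for even $n$ both sides are $0$ by our convention. Therefore $T_i$ equals the least period, modulo $M:=2^{r_i-w_i}$, of the sequence $\{b_n\}_{n\ge 1}$ defined by $b_n=C_8(n)$ for odd $n$ and $b_n=0$ for even $n$.

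Next I would make $C_8$ explicit. For odd $n$, writing $n=2t+1$ one has $n^{2}-1=4t(t+1)$, so $C_{8}(n)=\frac{n^{2}-1}{8}=\frac{t(t+1)}{2}=:f(t)$, the $t$-th triangular number. I claim a positive integer $T$ is a period of $\{b_n\}$ if and only if $T=2T'$ with $f(t+T')\equiv f(t)\pmod M$ for all $t\ge 0$. Indeed, $T$ must be even: if $T$ were odd, then for odd $n$ the index $n+T$ is even, giving $b_n=b_{n+T}=0$, which contradicts $b_3=f(1)=1\not\equiv 0\pmod M$. Writing $T=2T'$, the period condition on odd indices becomes exactly $f(t+T')\equiv f(t)\pmod M$, while on even indices it is automatic. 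Hence $T_i=2\tau$, where $\tau$ is the least period of $\{f(t)\bmod M\}$.

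It remains to compute $\tau$. From $f(t+c)-f(t)=\tfrac{c(2t+c+1)}{2}$ one gets $f(t+4)-f(t)=4t+10\equiv 0\pmod 2$ and $f(t+8)-f(t)=8t+36\equiv 0\pmod 4$, so $\tau\mid 4$ when $M=2$ and $\tau\mid 8$ when $M=4$. The values $f(0),f(1),\dots,f(7)$ are $0,1,3,6,10,15,21,28$, i.e. $0,1,1,0,0,1,1,0$ modulo $2$ and $0,1,3,2,2,3,1,0$ modulo $4$. Checking the proper divisors of $4$ and of $8$ respectively (already $f(0)=0\not\equiv f(2)$ rules out period $2$, and $f(0)=0\not\equiv f(4)\equiv 2$ rules out period $4$ modulo $4$), one finds $\tau=4$ when $r_i-w_i=1$ and $\tau=8$ when $r_i-w_i=2$. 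In both cases $T_i=2\tau=2^{r_i-w_i+2}$, as desired.

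The only real obstacle is that Proposition \ref{reduction} cannot lower the modulus below $2^{3}$, so one cannot pass to $C_4$ or $C_2$; the base quotient $C_8$ must be handled directly via the triangular-number identity above, and some care is needed with the convention $C_{2^{r_i}}(n)=0$ for even $n$ when translating periods of $\{f(t)\}$ back to periods of $\{b_n\}$. Everything after that reduction is a finite check.
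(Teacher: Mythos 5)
Your proof is correct and follows essentially the same route as the paper: reduce via Proposition \ref{reduction} to $C_{2^{3}}(n)$ modulo $2^{r_i-w_i}$, then determine the least period of that fixed sequence by a finite check. The only difference is that you carry out explicitly (via the triangular-number identity $C_8(2t+1)=t(t+1)/2$ and the parity argument) the direct calculation the paper leaves to the reader.
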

\begin{proof}
From Proposition \ref{reduction}, for $\gcd(n,2)=1$, we have
$$
C_{2^{r_{i}}}(n)\equiv C_{2^{3}}(n)\quad ({\rm mod}\, 2^{2}).
$$
So, $T_{i}$ equals to the least period of the sequence $\{C_{2^{3}}(n)\mod 2^{r_{i}-w_{i}}\}$.
 By Proposition \ref{fun} (2), we have
 $$
C_{2^{3}}(n+ \ell\cdot 2^{3})\equiv C_{2^{3}}(n)+2\ell n^{-1}
\quad ({\rm mod}\, 2^{2}),
$$
which implies the desired result. In fact, one can also verify this lemma by direct calculations.  
\end{proof}

\begin{lemma}
If $p_{i}=2$, $r_{i}= 2$ and $w_{i}=1$, then $T_{i}=1$.
\end{lemma}

We summarize the above results in the following proposition.
\begin{proposition}
For each $1\le i\le k$, if $p_{i}$ is an odd prime, then
\begin{equation}
T_{i}=\left\{ \begin{array}{ll}
                 1 & w_{i}=r_{i},\\

                 p_{i}^{r_{i}-w_{i}+1} & w_{i}<r_{i};
                 \end{array} \right.
\notag
\end{equation}
otherwise if $p_{i}=2$, then
\begin{equation}
T_{i}=\left\{ \begin{array}{ll}
                 1 & w_{i}=r_{i},\\

                 4 & r_{i}=1, w_{i}=0,\\

                 8 & r_{i}=2, w_{i}=0,\\

                 1 & r_{i}=2, w_{i}=1,\\

                 2^{r_{i}-w_{i}+2} & r_{i}\ge3, w_{i}<r_{i}.
                 \end{array} \right.
\notag
\end{equation}
In particular, the least period of $\{a_n\}$ is $T=T_{1}T_{2}\cdots T_{k}$.
\end{proposition}

When $m=p^r$ with $p$ an odd prime and $r\ge 1$, we have $T=p^{r+1}$, which is consistent with \cite[Proposition 2.1]{CW}. If $m=2^r$ with $r\ge 3$, then
$T=2^{r+2}$; but by \cite[Proposition 2.1]{CW}, the least period of the sequence defined there by Euler quotient is $2^{r+1}$.

Finally, we want to define a new sequence $\{b_{n}\}$, which is much simpler but has the same least period as $\{a_{n}\}$.

For an integer $n\ge 1$ with $\gcd(n,m)=1$, $b_{n}$ is defined to be the unique integer with
$$
b_{n}\equiv C_{m}(n)\quad ({\rm mod}\, m),\qquad 0\le b_{n}\le m-1;
$$
and we also define
$$
b_{n}=0, \qquad \textrm{if $\gcd(n,m)\neq1$}.
$$
Since $b_{n}$ also satisfies (\ref{1period}) for any integer $n$ with  $\gcd(n,m)=1$, the least period of $\{b_{n}\}$ equals to that
of  $\{a_{n}\}$.

\begin{proposition}
The sequence $\{b_n\}$ has the same least period as $\{a_n\}$.
\end{proposition}

\section{Carmichael-Wieferich Numbers}\label{number}
In this section, except for extending some results in \cite{ADS}, we study Carmichael-Wieferich numbers from more aspects,
especially Proposition \ref{limit}.

First, we want to deduce some basic facts for Carmichael-Wieferich numbers.

\begin{proposition}
If $m\ge 3$ and $1\le a\le m$ with $\gcd(a,m)=1$, then $m$ cannot be a Carmichael-Wieferich number with bases both $a$ and $m-a$.
\end{proposition}
\begin{proof}
Notice that $\lambda(m)$ is even when $m\ge 3$. By Proposition \ref{fun} (2), we have
$$
C_{m}(m-a)\equiv C_{m}(a)-\frac{\lambda(m)}{a}\quad ({\rm mod}\, m).
$$
Then, the desired result comes from $\lambda(m)<m$.  
\end{proof}

\begin{corollary}
If $m\ge 3$, define the set $S_{m}=\{a: 1\le a\le m, \gcd(a,m)=1, m$ is a Carmichael-Wieferich number with base $a\}$.
Then $|S_{m}|\le \varphi(m)/2$.
\end{corollary}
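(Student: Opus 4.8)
The plan is to pair up bases via the involution $a \mapsto m-a$ and invoke the preceding proposition on each pair.

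First I would observe that the map $\iota\colon a \mapsto m-a$ sends $\{a : 1\le a\le m,\ \gcd(a,m)=1\}$ to itself, since $\gcd(a,m)=1$ is equivalent to $\gcd(m-a,m)=1$, and it is an involution. Next I would check that $\iota$ has no fixed points when $m\ge 3$: a fixed point would satisfy $a = m-a$, i.e.\ $m = 2a$, whence $a\mid m$ and $\gcd(a,m)=a$; combined with $\gcd(a,m)=1$ this forces $a=1$ and $m=2$, contrary to $m\ge 3$. Consequently the $\varphi(m)$ admissible bases split into exactly $\varphi(m)/2$ two-element orbits $\{a,\,m-a\}$ under $\iota$ (in particular $\varphi(m)$ is even, which is consistent with $\lambda(m)$ being even for $m\ge 3$).

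Then I would apply the preceding proposition, which states that $m$ cannot be a Carmichael--Wieferich number with both bases $a$ and $m-a$. Hence each orbit $\{a,\,m-a\}$ contributes at most one element to $S_m$. Summing over the $\varphi(m)/2$ orbits gives $|S_m|\le \varphi(m)/2$.

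There is essentially no real obstacle here: the argument is a one-line counting consequence of the proposition, and the only point requiring a word of justification is the absence of fixed points of $\iota$, which is the elementary computation above.
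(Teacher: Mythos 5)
Your proposal is correct and is exactly the intended argument: the paper states this as an immediate corollary of the preceding proposition, namely that the $\varphi(m)$ admissible bases pair off under $a\mapsto m-a$ (fixed-point-free for $m\ge 3$) and each pair contributes at most one element of $S_m$. Your explicit verification that the involution has no fixed points is a welcome, if routine, addition.
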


By Proposition \ref{fun} (2), for any $\gcd(b,m)=1$, there exists $1\le a\le m^{2}$ with
$b\equiv a $ (mod $m^{2}$), such that
$$
C_{m}(b)\equiv C_{m}(a)\quad ({\rm mod}\, m).
$$
Hence, if we want to determine with which base $m$ can be a Carmichael-Wieferich number, we only need to consider $1\le a\le m^{2}$.

Assume that $m$ has  the prime factorization $m=p_{1}^{r_{1}}\cdots p_{k}^{r_{k}}$.  
In \cite[Proposition 4.4]{ADS} the authors have used the Euler quotient $Q_m$ to define a homomorphism from $(\mathbb{Z}/m^{2}\mathbb{Z})^{*}$ 
to $(\mathbb{Z}/m\mathbb{Z},+)$, whose image is $d\mathbb{Z}/m\mathbb{Z}$, where 
\begin{equation} \label{d(m)}
d=\prod_{i=1}^{k}d_{i} 
\quad \textrm{and} \quad 
d_{i}=\left\{ \begin{array}{ll}
                 \gcd(p_{i}^{r_{i}}, 2\varphi(m)/\varphi(p_i^{r_i})) & \textrm{if $p_{i}=2$ and $r_{i}\ge 2$},\\

                 \gcd(p_{i}^{r_{i}}, \varphi(m)/\varphi(p_i^{r_i})) & \textrm{otherwise}.
                 \end{array} \right.
\end{equation}
Here, we can do similar things using the Carmichael quotient and applying the same strategy as in \cite{ADS}. 

By Proposition \ref{fun}, the Carmichael quotient $C_{m}(x)$ induces a homomorphism
$$
\phi_m:(\mathbb{Z}/m^{2}\mathbb{Z})^{*}\to (\mathbb{Z}/m\mathbb{Z},+), x\mapsto C_m(x).
$$

\begin{proposition}\label{hom}
Let $m=p_{1}^{r_{1}}\cdots p_{k}^{r_{k}}$ be the prime factorization of $m \ge 2$. For $1\le i\le k$, put
\begin{equation}
d^{\prime}_{i}=\left\{ \begin{array}{ll}
                 \gcd(p_{i}^{r_{i}}, 2\lambda(m)/\lambda(p_i^{r_i})) & \textrm{if $p_{i}=2$ and $r_{i}= 2$},\\

                 \gcd(p_{i}^{r_{i}}, \lambda(m)/\lambda(p_i^{r_i})) & \textrm{otherwise}.
                 \end{array} \right.
\notag
\end{equation}
Let $d^{\prime}=\prod_{i=1}^{k}d^{\prime}_{i}$. Then the image of the homomorphism $\phi_m$ is $d^{\prime}\mathbb{Z}/m\mathbb{Z}$. 
\end{proposition}

\begin{proof}
We show the desired result case by case. 

(I) First we prove the result for the case $k=1$, that is $m=p^r$, where $p$ is a prime and $r$ is a positive integer.  

Suppose that $p=2$. If $r=2$, then $C_m(3)=2$, and for any positive integer $n$ we have $C_m(2n+1)= n(n+1)$, which is even, so the image of $\phi_m$ is $2\Z/m\Z$. 
On the other hand, if $r = 1$ or $r \ge 3$, since $C_2(3)=1$ and $C_8(3)=1$, by using Proposition \ref{reduction} we see that $C_m(3)$ is an odd integer, 
so the image of $\phi_m$ is $\Z/m\Z$. 

Now, assume that $p>2$. Note that $C_p(p+1) \equiv -1  \pmod p$, by Proposition \ref{reduction} we have  $C_m(p+1) \equiv -1  \pmod p$, 
which implies that $p \nmid C_m(p+1)$. Thus, there exists a positive integer $n$ such that $n C_m(p+1) \equiv 1 \pmod m$. 
Then, by Proposition \ref{fun} (1) we deduce that  $C_m((p+1)^n) \equiv 1 \pmod m$. 
So, the image of $\phi_m$ is $\Z/m\Z$. 
 
(II) To complete the proof, we prove the result when $k \ge 2$. 

For simplicity, denote $m_i = m/p_i^{r_i}$ and $n_i = \lambda(m) / \lambda(p_i^{r_i})$ for each $1\le i \le k$, 
and then let $m_i^{\prime}$  be an integer such that $m_i^{2}m_i^{\prime} \equiv 1 \pmod{p_i^{r_i}}$. 
By Proposition \ref{factor}, we have 
\begin{equation}\label{Cma}
C_m(a) \equiv \sum_{i=1}^{k} m_i m_i^{\prime} n_i C_{p_i^{r_i}}(a) \pmod{m}. 
\end{equation}
So, for each $1\le i \le k$, $C_m(a) \equiv  m_i m_i^{\prime} n_i C_{p_i^{r_i}}(a) \pmod{p_i^{r_i}}$. 
If $p_i=2$ and $r_i=2$, note that for any odd integer $a>1$, $C_4(a)$ is even, then we see that $d_i^{\prime} \mid n_i C_{p_i^{r_i}}(a)$, 
and thus $d_i^{\prime} \mid C_m(a)$. 
Otherwise if $p_i>2$ or $r_i \ne 2$, then $d_i^{\prime} \mid n_i$, and so $d_i^{\prime} \mid C_m(a)$. 
Hence, we have $d^{\prime} \mid C_m(a)$ for any integer $a$ coprime to $m$. 

Let $b = \gcd(m, m_1m_1^{\prime}n_1, \ldots, m_km_k^{\prime}n_k)$. 
Then, there exist integers $X_1,\ldots, X_k$ such that 
\begin{equation} \label{bm}
b \equiv \sum_{i=1}^{k} m_i m_i^{\prime} n_i X_i \pmod{m}. 
\end{equation}
If we denote $b_i = \gcd(p_i^{r_i}, m_i m_i^{\prime}n_i)$ for each $1 \le i \le k$, then $b = \prod_{i=1}^{k} b_i$, 
here we remark that $b_i = \gcd(p_i^{r_i}, n_i)$. 
It is easy to see that for each $1 \le i \le k$, if $p_i>2$ or $r_i \ne 2$, we have $d_i^{\prime} = b_i$. 
Further, when $p_i=2$ and $r_i=2$, $d_i^{\prime} = 2b_i$ if $8 \nmid \lambda(2p_1\ldots p_k)$, and $d_i^{\prime} = b_i$ otherwise.  

We now have three cases for $m$: 
\begin{enumerate}
\item[(i)] There exists $1\le j \le k$ such that $p_j=2, r_j=2$ and 
$$
8 \nmid \lambda(2p_1\ldots p_k). 
$$
\item[(ii)] There exists $1\le j \le k$ such that $p_j=2, r_j=2$ and 
$$
8 \mid \lambda(2p_1\ldots p_k). 
$$

\item[(iii)] All the other cases. 
\end{enumerate}
Clearly, in Cases (ii) and (iii) we have $d^{\prime}=b$, and in Case (i) $d^{\prime} = 2b$.

According to (I),  there exist integers $a_i$ with $p_i \nmid a_i$ for $1 \le i \le k$ defined by 
\begin{equation*}
C_{p_i^{r_i}}(a_{i}) \equiv \left\{ \begin{array}{ll}
                 2X_i & \textrm{in Case (i)}, \\
                 X_i & \textrm{in Case (iii)}, \\
                 & \qquad\qquad\qquad\qquad\qquad  \pmod{p_i^{r_i}} \\
                 X_i & \textrm{in Case (ii) and $i\ne j$}, \\
                 0 & \textrm{in Case (ii) and $i=j$}.     
                 \end{array} \right.
\end{equation*}

By the Chinese Remainder Theorem, we can choose a positive integer $a$ such that $a \equiv a_i \pmod{p_i^{2r_i}}$. 
So, by Proposition \ref{fun} (2) we have $C_{p_i^{r_i}}(a) \equiv C_{p_i^{r_i}}(a_i) \pmod{p_i^{r_i}}$. 
Then, combining with \eqref{bm} and the relation between $b$ and $d^{\prime}$, we obtain 
$m_im_i^{\prime}n_i C_{p_i^{r_i}}(a) \equiv d^{\prime} \pmod{p_i^{r_i}}$ for each $1\le i \le k$ in all the three cases.  
Finally, using \eqref{Cma} we have $C_m(a) \equiv d^{\prime} \pmod{m}$, which completes the proof. 
\end{proof}

Comparing \eqref{d(m)} with Proposition \ref{hom}, we have $d^{\prime} \mid d$. 
Moreover, by Proposition \ref{relation} we get 
$$
\frac{\varphi(m)}{\lambda(m)}d^{\prime}\Z/m\Z = d\Z/m\Z, 
$$
which implies that $\gcd(\frac{\varphi(m)}{\lambda(m)}d^{\prime},m)=d$. 

In Proposition \ref{hom}, if choosing $m=2^r$ with $r\ge 3$, we have $d=2$ and $d^{\prime}=1$; while choosing $m=2^{r_1}p^{r_2}$ with $r_1\ge 3$ and odd prime $p\equiv 3$ (mod 4), we have $d=4$ and $d^{\prime}=1$. Hence, compared with \cite[Proposition 4.4]{ADS}, the homomorphism $\phi_m$ can be surjective in  more cases. 

For any integer $m\ge 2$, we define the set 
\begin{align*}
T_{m}=\{a: &1\le a\le m^{2}, \gcd(a,m)=1, \\ 
& \textrm{$m$ is a Carmichael-Wieferich number with base $a$}\}.
\end{align*}
 Actually, $T_m$ is the kernel of the homomorphism $\phi_m$, then the following result follows directly from Proposition \ref{hom}.

\begin{corollary}\label{exist}
We have $|T_{m}|=d^{\prime}\varphi(m)$, where $d^{\prime}$ is defined in Proposition \ref{hom}.
\end{corollary}

Corollary \ref{exist} shows that any integer $m\ge 2$ can be a Carmichael-Wieferich number with some base. However, the next proposition suggests that such Carmichael-Wieferich numbers are rare.

\begin{proposition} \label{limit}
We have $\lim\limits_{m\to \infty}\frac{|T_{m}|}{\varphi(m^{2})}=0.$
\end{proposition}
\begin{proof}
Denote by $d(m)$ the parameter $d$ in \eqref{d(m)}. By Corollary \ref{exist}, we know that 
$$
\frac{|T_{m}|}{\varphi(m^{2})}\le\frac{d(m)}{m}.
$$
So, it suffices to prove that $\lim\limits_{m\to \infty}\frac{d(m)}{m}=0$.

For primes $p$, we have
$$
\lim\limits_{p\to \infty}\frac{d(p)}{p}=\lim\limits_{p\to \infty}\frac{1}{p}=0.
$$
So $\liminf\limits_{m\to \infty}\frac{d(m)}{m}=0.$

Suppose that $\limsup\limits_{m\to \infty}\frac{d(m)}{m}\ne0.$ Then there exists a subsequence $\{\frac{d(n_{i})}{n_{i}}\}$
such that $\lim\limits_{i\to \infty}\frac{d(n_{i})}{n_{i}}=\limsup\limits_{m\to \infty}\frac{d(m)}{m}\ne0.$

For an integer $m\ge 2$, let $m=p_{1}^{r_{1}}\cdots p_{k}^{r_{k}}$ be its prime factorization.
Put $\alpha_{m}=\max\{r_{1},\cdots,r_{k}\}$. Here we use the
notation in \eqref{d(m)}. For each $1\le j\le k$, we have $\frac{d(m)}{m}\le d_{j}/p_{j}^{r_{j}}$.
In particular, if $p_{j}$ is the largest prime factor of $m$, then $\frac{d(m)}{m}\le 2/p_{j}^{r_{j}}$.

For each $i$, let $p_{i}$ be the largest prime factor of $n_{i}$, we abbreviate $\alpha_{n_{i}}$ to $\alpha_{i}$.
Since $\frac{d(n_{i})}{n_{i}}\le \frac{2}{p_{i}}$ for each $i$ and $\lim\limits_{i\to \infty}\frac{d(n_{i})}{n_{i}}\ne 0$, there must exist an integer $q$ such that
$p_{i}<q$ for all $i$. Put $\beta=2\prod\limits_{\substack{2\le p<q\\ \textrm{$p$ prime}}}(p-1)$.
Since $d(n_{i})\le \beta$, we have $\frac{d(n_{i})}{n_{i}}\le \frac{\beta}{2^{\alpha_{i}}}$ for each $i$.
Notice that $n_{i}\to \infty$ when $i\to \infty$, we must have $\alpha_{i}\to \infty$ as $i\to \infty$.
Hence, we have $\lim\limits_{i\to \infty}\frac{d(n_{i})}{n_{i}}=0$. This leads to a contradiction.

So, we have $\limsup\limits_{m\to \infty}\frac{d(m)}{m}=0$. This completes the proof.   
\end{proof}

Assume that there are infinitely many Sophie Germain primes. We construct a sequence $\{n_{i}\}$ with $n_i=p_i(2p_i+1)$, where $p_i$ is a Sophie Germain prime, and then $2p_i+1$ is also a prime. It is easy to see that $d(n_i)=p_i$ and $\lim\limits_{i\to \infty}\frac{d(n_{i})}{\sqrt{n_{i}}}=\frac{1}{\sqrt{2}}$. This implies that the limit $\lim\limits_{m\to \infty}\frac{d(m)}{\sqrt{m}}=0$ may be not true in general.

In the sequel, we want to characterize all the Carmichael-Wieferich numbers.

Let $p$ be a prime and $a$ an integer with $p\nmid a$. Put
$$
\sigma(a,p)=\ord_{p}(a^{p-1}-1)-1\quad \textrm{if $p$ is odd};
$$
\begin{equation}
\sigma(a,2)=\left\{ \begin{array}{ll}
                 \ord_{2}(a-1)-1 & \textrm{\quad if $a\equiv1$ (mod 4)},\\

                 \ord_{2}(a+1)-1 & \textrm{\quad if $a\equiv3$ (mod 4)}.
                 \end{array} \right.
\notag
\end{equation}
Then, we can state an analogue of \cite[Proposition 5.4]{ADS}. For the convenience of the reader, we reproduce the proof. 

\begin{proposition} \label{order}
Let $\gcd(a,m)=1$, and $m=p_{1}^{r_{1}}\cdots p_{k}^{r_{k}}$ be the prime factorization of $m\ge 3$.
Fix an integer $j$ with $1\le j\le k$, let $p=p_{j}$ and $r=r_{j}$. If $p\ne 2$ or $r\le 2$, put
\begin{equation}
n=\left\{ \begin{array}{ll}
                 0 & \textrm{if $\ord_{p}\lcm\left(p_{1}-1,\cdots,p_{k}-1\right)\le r-1$},\\

                 \ord_{p}\lcm\left(p_{1}-1,\cdots,p_{k}-1\right)-r+1 & \textrm{otherwise};
                 \end{array} \right.
\notag
\end{equation}
otherwise if $p=2$ and $r> 2$, put
\begin{equation}
n=\left\{ \begin{array}{ll}
                 0 & \textrm{if $\ord_{p}\lcm\left(p_{1}-1,\cdots,p_{k}-1\right)\le r-2$},\\

                 \ord_{p}\lcm\left(p_{1}-1,\cdots,p_{k}-1\right)-r+2 & \textrm{otherwise}.
                 \end{array} \right.
\notag
\end{equation}
Moreover, put
\begin{equation}
e(m,p)=\left\{ \begin{array}{ll}
                 n & \textrm{if $p\ne 2$ or $r\le 2$},\\

                 n-1 & \textrm{otherwise}.
                 \end{array} \right.
\notag
\end{equation}
Then we have
$$
\ord_{p}C_{m}(a)=e(m,p)+\sigma(a,p).
$$
\end{proposition}
\begin{proof}
Notice that  $\lambda(m)=p^{n}\lambda(p^{r})X$, where $X$ is an integer with $p\nmid X$. Put $b=a^{p^{n}\lambda(p^{r})}$. 
Then, since 
$$
a^{\lambda(m)}-1=b^X-1=(b-1)\sum_{i=0}^{X-1}b^i,
$$
$b\equiv 1$ (mod $p$) and $\sum_{i=0}^{X-1}b^i \equiv X \not \equiv 0$ (mod $p$), we obtain
$$
\ord_p(a^{\lambda(m)}-1) =\ord_p(b-1)=\ord_p(a^{p^{n}\lambda(p^{r})}-1). 
$$
Thus, if $p$ is an odd prime, by using \cite[Lemma 5.1]{ADS} we have 
$$
\ord_p(a^{\lambda(m)}-1)=\ord_p((a^{p-1})^{p^{n+r-1}}-1)
=\ord_p(a^{p-1}-1)+n+r-1,
$$
which implies that 
$$
\ord_{p}C_{m}(a)=e(m,p)+\sigma(a,p).
$$
Similarly, applying \cite[Lemmas 5.1 and 5.3]{ADS}, one can verify the remaining case $p=2$ by noticing that $m\ge 3$.  
\end{proof}

The next proposition, a criterion for a number $m$ being a Carmichael-Wieferich number, follows directly from Proposition \ref{order}. 
\begin{proposition}\label{cri}
Let $\gcd(a,m)=1$, and $m=p_{1}^{r_{1}}\cdots p_{k}^{r_{k}}$ be the prime factorization of $m\ge 3$.
Then the following statements are equivalent:

{\rm (1)} $m$ is a Carmichael-Wieferich number with base $a$,

{\rm (2)} $e(m,p_{j})+\sigma(a,p_{j})\ge r_{j}$, for any $1\le j\le k$.
\end{proposition}

Although it is known that Wieferich primes exist for many different bases (see \cite{Mon}), the following problem is
still open.
\begin{center}
\it{Whether Wieferich primes exist for all bases?}
\end{center}

\begin{proposition}
For a non-zero integer $a$, if there exists a Carmichael-Wieferich number $m$ with base $a$ and $m$ has
an odd prime factor, then there exists a Wieferich prime with base $a$.
\end{proposition}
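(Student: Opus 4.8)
The plan is to reduce the existence of a Carmichael--Wieferich number with an odd prime factor to the existence of a Wieferich prime (in the classical sense, i.e.\ a prime $p$ with $p \nmid a$ and $Q_p(a) \equiv 0 \pmod p$) with the same base $a$. Let $m$ be a Carmichael--Wieferich number with base $a$, and let $p$ be an odd prime dividing $m$. Since $m$ is Carmichael--Wieferich with base $a$, we have $C_m(a) \equiv 0 \pmod m$, and in particular $C_m(a) \equiv 0 \pmod p$. The key tool is Corollary \ref{modp}: with $\alpha$ the exact power of $p$ dividing $m$, and with $d_1 = \lambda(m)/\lambda(p^\alpha)$, $m_1 = m/p^\alpha$, and $m_1'$ chosen so that $m_1^2 m_1' \equiv 1 \pmod{p^\alpha}$, we have
$$
C_m(a) \equiv m_1 m_1' d_1 Q_p(a) \pmod p.
$$
So far this is immediate from the earlier material; the content of the argument is in choosing $p$ so that the coefficient $m_1 m_1' d_1$ is invertible modulo $p$.

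The main obstacle is exactly that: $d_1 = \lambda(m)/\lambda(p^\alpha)$ might itself be divisible by $p$, in which case the congruence above gives no information (both sides could be $0 \pmod p$ regardless of $Q_p(a)$). The fix is the remark following Corollary \ref{modp}: one should take $p$ to be the \emph{largest} odd prime factor of $m$. Then $\lambda(m) = \operatorname{lcm}(\lambda(p_1^{\alpha_1}),\dots,\lambda(p_k^{\alpha_k}))$, and since $p$ is the largest odd prime dividing $m$, no other factor $\lambda(p_i^{\alpha_i})$ contributes a factor of $p$ to the lcm beyond what $\lambda(p^\alpha) = p^{\alpha-1}(p-1)$ already supplies (a prime $q < p$ has $q - 1 < p$, and $q^{\beta - 1} \mid \lambda(q^\beta)$ contributes no $p$; likewise the $2$-part $2^{\gamma-2}$ contributes no $p$ for odd $p$). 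Hence $p \nmid d_1$. Also $p \nmid m_1$ by definition of $\alpha$, and $p \nmid m_1'$ since $m_1^2 m_1' \equiv 1 \pmod{p^\alpha}$ forces $m_1'$ to be a unit mod $p$. Therefore $\gcd(m_1 m_1' d_1, p) = 1$.

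With this choice of $p$, the congruence $C_m(a) \equiv m_1 m_1' d_1 Q_p(a) \pmod p$ together with $C_m(a) \equiv 0 \pmod p$ and $\gcd(m_1 m_1' d_1, p) = 1$ yields $Q_p(a) \equiv 0 \pmod p$. Finally, since $\gcd(a, m) = 1$ we have $p \nmid a$, so $p$ is a genuine Wieferich prime with base $a$, which is what we wanted. One small point to check is that $a$ being a nonzero integer (possibly negative or $\pm 1$) causes no trouble: $\gcd(a,m)=1$ still gives $p \nmid a$, and the definitions of $Q_p(a)$ and $C_m(a)$ make sense verbatim; if $a = \pm 1$ then $Q_p(a) = 0$ trivially and any odd prime factor of $m$ is vacuously "Wieferich," so the statement holds in that degenerate case as well.
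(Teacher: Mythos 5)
Your proof is correct, but it follows a different route from the paper's. The paper proves the proposition via its criterion for Carmichael--Wieferich numbers (Proposition \ref{cri}): writing $m=p_1^{\alpha_1}\cdots p_k^{\alpha_k}$ with $p_1<\cdots<p_k$, the quantity $e(m,p_k)$ vanishes because $p_k\nmid\lcm(p_1-1,\dots,p_k-1)$, so the criterion forces $\sigma(a,p_k)\ge\alpha_k\ge 1$, i.e.\ $p_k^2\mid a^{p_k-1}-1$, and $p_k$ is odd since $m$ has an odd prime factor. You instead reduce modulo the largest odd prime factor $p$ via Corollary \ref{modp}, check that the coefficient $m_1m_1'd_1$ is a unit mod $p$ (exactly the observation made in the paper's remark following that corollary: since every other prime factor $q$ of $m$ satisfies $q<p$, the $p$-part of $\lambda(m)$ comes entirely from $\lambda(p^\alpha)$, so $p\nmid d_1$), and conclude $Q_p(a)\equiv 0\pmod p$ from $C_m(a)\equiv 0\pmod p$. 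Both arguments hinge on the same arithmetic fact --- the largest prime factor of $m$ contributes the full $p$-part of $\lambda(m)$ --- but invoke different machinery: the paper's $p$-adic valuation formula yields the stronger conclusion $\sigma(a,p_k)\ge\alpha_k$ (so $p_k^{\alpha_k+1}\mid a^{p_k-1}-1$), while your mod-$p$ reduction is lighter, using only Corollary \ref{modp} rather than the valuation proposition behind Proposition \ref{cri}, and suffices for the statement. Your handling of the side points ($p\nmid m_1m_1'$, $p\nmid a$ from $\gcd(a,m)=1$, and the degenerate bases $a=\pm1$) is also fine.
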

\begin{proof}
Let $m=p_{1}^{r_{1}}\cdots p_{k}^{r_{k}}$ be the prime factorization of $m$ with $p_{1}<p_{2}<\cdots<p_{k}$, where $p_k$ is an odd prime.
Since $e(m,p_k)=0$ and $m$ is a Carmichael-Wieferich number with base $a$, by Proposition \ref{cri} we have $\sigma(a,p_{k})\ge r_{k}\ge 1$.
Notice that $p_{k}$ is an odd prime, so $p_{k}$ is a Wieferich prime with base $a$.  
\end{proof}

Finally, we want to remark that a Carmichael-Wieferich number $m$ with base $a$ is also a Wieferich number with base $a$, but the converse is not true.

\begin{example}
{\rm From Table 1 of \cite{Mon}, 3 and 7 are two Wieferich primes with base 19. It is straightforward
to see that 2 is not a Wieferich prime with base 19. By \cite[Theorem 5.5]{ADS},
$m=2^{2}\cdot 3\cdot 7$ is a Wieferich number with base 19. But by Proposition \ref{cri}, $m$ is not
a Carmichael-Wieferich number with base 19.}
\end{example}

\section{Involving perfect nonlinear function}
Let $(A,+)$ and $(B,+)$ be two additive abelian groups, and denote by $\bar{A}$ the set of non-identity elements of $A$. When $|A|$ is a multiple of $|B|$, we can consider the following definition; see \cite{Ding2004} for more details.
\begin{definition}
{\rm
Let $f: A\to B$ be a function from $A$ to $B$. Then $f$ is called \emph{perfect nonlinear} if for every $(a,b)\in \bar{A}\times B$,
$|\{x\in A: f(x+a)-f(x)=b\}|=\frac{|A|}{|B|}$.
}
\end{definition}

Perfect nonlinear functions have important applications in cryptography, sequences and coding theory. For example, as in \cite{Ding2003}, such functions can be used to construct authentication codes.

For the homomorphism
$\phi_m:(\mathbb{Z}/m^{2}\mathbb{Z})^{*}\to (\mathbb{Z}/m\mathbb{Z},+)$, defined in Section \ref{number}, we extend its definition to those integers $a$ with $\gcd(a,m)\ne 1$ by defining $\phi_m(a)=0$. Then we get a function 
 $$ 
 f_m : (\mathbb{Z}/m^{2}\mathbb{Z},+) \to (\mathbb{Z}/m\mathbb{Z},+), x \mapsto \phi_m(x).
 $$ 
 For this function $f_m$, we have the following proposition.
\begin{proposition}
The function $f_m$ is perfect nonlinear if and only if $m$ is a prime number.
\end{proposition}
\begin{proof}
First, suppose that $m$ is a prime number. By \cite[Lemma 8]{Ding2003} (or \cite[Theorem 48]{Ding2004}) and Proposition \ref{hom}, it is easy to show that $f_m$ is perfect nonlinear.

Now assume that $m$ is a composite integer. Let $p$ be a prime factor of $m$. Notice that $f_m(kp)=0$ for any $k\ge 1$, and $(m+2)p\le m(m+2)/2 < m^2$. Then choosing $(p,0)\in \mathbb{Z}/m^{2}\mathbb{Z} \times \mathbb{Z}/m\mathbb{Z}$, we obtain 
\begin{align*}
|\{x\in \mathbb{Z}/m^{2}\mathbb{Z}: f_m(x+p)-f_m(x)=0\}|
&\ge |\{x=kp: 1\le k \le m+2\}|\\
&= m+2>m.
\end{align*}
 By definition, the function $f_m$ is not perfect nonlinear.  
\end{proof}

Thus, the function $f_m$ gives a new kind of perfect nonlinear functions when $m$ is a prime number. Furthermore, this kind of perfect nonlinear functions is much more convenient for computations than that given in \cite[Example 49]{Ding2004}.

\section*{acknowledgements}
The author would like to thank Professor Arne Winterhof for sending him  the recent work \cite{CW}. 
He wants to thank the referee for careful reading and valuable comments. 
He is also grateful to the referee of his recent joint paper \cite{LSS} for pointing our the error in the previous Proposition \ref{hom}.

\end{document}